\newtheorem{theorem}{Theorem}[section]
\newtheorem*{maintheorem}{Theorem}
\newtheorem{lemma}[theorem]{Lemma}
\newtheorem{proposition}[theorem]{Proposition}
\theoremstyle{definition}
\newtheorem*{definition}{Definition}
\theoremstyle{remark}
\newtheorem*{remark}{Remark}
\numberwithin{equation}{section}
\newcommand{\uc}[1]{\ensuremath \overset{#1}{\circ}}
\newcommand{\blup}[2]{\ensuremath #1 \sharp #2 \overline{\mathbb{CP}}^2}
\DeclareMathOperator{\image}{im}
\DeclareMathOperator{\id}{id}
\DeclareMathOperator{\Ext}{Ext}
\def\sheaf#1{\ensuremath \mathcal#1}
\begin{document}

\title[Surfaces of general type with $p_g=1$ and $q=0$]{Surfaces
of general type with $p_g=1$ and $q=0$}

\author{Heesang Park}

\address{School of Mathematics, Korea Institute for Advanced Study, Seoul 130-722, Korea}

\email{hspark@kias.re.kr}

\author{Jongil Park}

\address{Department of Mathematical Sciences, Seoul National University, Seoul 151-747, Korea \&
         Korea Institute for Advanced Study, Seoul 130-722, Korea}

\email{jipark@snu.ac.kr}

\author{Dongsoo Shin}

\address{Department of Mathematics, Chungnam National University, Daejeon 305-764, Korea}

\email{dsshin@cnu.ac.kr}


\subjclass[2000]{Primary 14J29; Secondary 14J10, 14J17, 53D05}

\keywords{$\mathbb{Q}$-Gorenstein smoothing, rational blow-down surgery, surface of general type}

\begin{abstract}
In this paper we construct a new family of simply connected minimal complex surfaces of general type with $p_g=1$, $q=0$, and $K^2=3, 4, 5, 6, 8$ using a $\mathbb{Q}$-Gorenstein smoothing theory. We also reconstruct minimal complex surfaces of general type with $p_g=1$, $q=0$, and $K^2=1, 2$ using the same method.
\end{abstract}

\maketitle

\section{Introduction}

 In the geography of minimal complex surfaces of general type, one of the fundamental problems
 is to find a new family of simply connected minimal surfaces with given topological invariants
 such as $p_g$, $q$, and $K^2$.
 This geography problem has been studied extensively by algebraic geometers and topologists
 for a long time, so that various families of surfaces of general type have been constructed
 (\cite{BHPV}, Chapter VII).
 Nevertheless, in the case of $p_g=1$ and $q=0$, only a few examples are known (see below)
 and there were no such simply connected examples with $K^2 \geq 3$ previously known.
 Note that this class of surfaces has been drawn attention because they provide counterexamples
 to the Torelli problems.

 One the other hand, complex surfaces of general type with $p_g=1$ and $q=0$ also provide exotic smooth structures on the topological $4$-manifolds $3\mathbb{CP}^2 \sharp n \overline{\mathbb{CP}}^2$.
In fact, many examples of exotic $3\mathbb{CP}^2 \sharp n \overline{\mathbb{CP}}^2$ for $n=5$ or $7 \le n \le 19$ were constructed via various surgery techniques. For example, Gompf~\cite{Gompf} constructed exotic $3\mathbb{CP}^2 \sharp n \overline{\mathbb{CP}}^2$ for $14 \le n \le 18$ by a symplectic sum, B.D. Park~\cite{BDPark} constructed exotic $3\mathbb{CP}^2 \sharp n \overline{\mathbb{CP}}^2$ for $10 \le n \le 13$ via knot surgery and symplectic sum, and
 Stipsicz and Szab\'{o}~\cite{Stipsicz-Szabo}, and the second author~\cite{Jpark} constructed exotic $3\mathbb{CP}^2 \sharp 9 \overline{\mathbb{CP}}^2$ and $3\mathbb{CP}^2 \sharp 8\overline{\mathbb{CP}}^2$ using rational blowdowns, respectively.
 However it was not known whether those exotic $3\mathbb{CP}^2 \sharp n \overline{\mathbb{CP}}^2$'s constructed so far admit complex structures.

 All known surfaces of general type with $p_g=1$ and $q=0$ are constructed by classical methods: Quotient and covering.  Kynev~\cite{Kynev} constructed a surface with $K^2=1$ as a quotient of the Fermat sextic in $\mathbb{CP}^3$ by a suitable action of a group of order $6$. According to Catanese~\cite{Catanese}, all minimal surfaces of general type with $p_g=1$ and $K^2=1$ are diffeomorphic and simply connected. Catanese and Debarre~\cite{CD} constructed surfaces with $K^2=2$ by double coverings of the projective plane $\mathbb{CP}^2$ or smooth minimal K3 surfaces. In fact, they classified such surfaces into five classes according to the degree and the image of the bicanonical map. Four of them are simply connected and the other one has a torsion $\mathbb{Z}/2\mathbb{Z}$. Todorov~\cite{Todorov} also constructed non-simply connected surfaces with $2 \le K^2 \le 8$ by considering double covers of K3 surfaces. Note that all his examples with $3 \le K^2 \le 8$ have big fundamental groups.

The main result of this paper is the following.

\begin{maintheorem}
 There are simply connected minimal complex surfaces of general type with $p_g=1$, $q=0$,
 and $K^2=1, 2,\dotsc,6, 8$ and a minimal complex surface of general type with
 $p_g=1$, $q=0$, $K^2=2$, and $H_1 = \mathbb{Z}/2\mathbb{Z}$.
\end{maintheorem}

In order to construct such surfaces with $1 \le K^2 \le 6$, we take a similar strategy in Y. Lee and J. Park~\cite{Lee-Park-K^2=2}. We blow up an elliptic K3 surface in a suitable set of points so that we obtain a surface with a very special configuration of rational curves. Inside this configuration we find some disjoint chains which can be contracted to special quotient singularities. These singularities admit a local $\mathbb{Q}$-Gorenstein smoothing, which is a smoothing whose relative canonical class is $\mathbb{Q}$-Cartier. And then, we prove that these local smoothings can be glued to a global $\mathbb{Q}$-Gorenstein smoothing of the whole singular surface by showing that the obstruction space of a global smoothing is zero. Finally it is not difficult to show that a general fiber of the smoothing is the desired surface. The key ingredient of the construction is to develop a new method for proving that the obstruction space is zero because the method in Y. Lee and J. Park~\cite{Lee-Park-K^2=2} for a computation of the obstruction space cannot be applied to our cases.

For constructing a simply connected surface with $K^2=8$, we blow up a K3 surface $\overline{Y}$ in a suitable set of points so that we obtain a surface with some special disjoint linear chains of rational curves which can be contracted to singularities class $T$ on a singular surface $\overline{X}$ with  $H^2(\sheaf{T_{\overline{X}}}) \neq 0$. In order to prove the existence of a global $\mathbb{Q}$-Gorenstein smoothing of $\overline{X}$, we apply the cyclic covering trick developed in Y. Lee and J. Park~\cite{Lee-Park-Horikawa}. The cyclic covering trick says that, if a cyclic covering $\pi : V \to W$ of singular surfaces satisfies certain conditions and the base $W$ has a $\mathbb{Q}$-Gorenstein smoothing, then the cover $V$ has also a $\mathbb{Q}$-Gorenstein smoothing. The main ingredient is that we construct an unramified double covering $\overline{\pi} : \overline{X} \to X$ to a singular surface $X$ constructed in a recent paper \cite{Park} of the first author. It is a main result of H. Park~\cite{Park} that the singular surface $X$ has a global $\mathbb{Q}$-Gorenstein smoothing and a general fiber $X_t$ of the smoothing of $X$ is a surface of general type with $p_g=0$, $K^2=4$, and $\pi_1 = \mathbb{Z}/2\mathbb{Z}$. We show that the double covering $\overline{\pi} : \overline{X} \to X$ satisfies all the conditions of the cyclic covering trick; hence there is a global $\mathbb{Q}$-Gorenstein smoothing of $\overline{X}$. Then it is not difficult to show that a general fiber $\overline{X}_t$ of the smoothing of $\overline{X}$ is the desired surface.

This paper is organized as follows. In Section~\ref{section:construction} we reconstruct complex surfaces with $p_g=1$, $q=0$ and $K^2=2$: A simply connected surface and a surface with $H_1 = \mathbb{Z}/2\mathbb{Z}$. We prove in Section~\ref{section:obstruction=0} that the obstruction spaces of global smoothings of the singular surfaces constructed in Section~\ref{section:construction} are zero. In Section~\ref{section:Examples} and Section~\ref{section:K^2=8}, we construct various examples of simply connected surfaces with $p_g=1$, $q=0$ and $K^2=1, 2,\dotsc, 6, 8$.

\subsubsection*{Acknowledgement}
The authors would like to thank Professor Yongnam Lee for some valuable comments on the first draft of this article. Jongil Park was supported by Basic Science Research Program through the National Research Foundation of Korea (NRF) Grant funded by the Korean Government (2009-0093866). He also holds a joint appointment at Korea Institute for Advanced Study and at Research Institute of Mathematics in Seoul National University. Dongsoo Shin was supported by Basic Science Research Program through the National Research Foundation of Korea (NRF) grant funded by the Korean Government (2010-0002678).

\section{Surfaces with $p_g=1$, $q=0$, and $K^2=2$}
\label{section:construction}

In this section we construct a simply connected minimal surface of general type with $p_g=1$, $q=0$, and $K^2=2$. We start with a special rational elliptic surface $E(1)$. Let $L_1$, $L_2$, $L_3$, and $A$ be lines in $\mathbb{CP}^2$ and let $B$ be a smooth conic in $\mathbb{CP}^2$ intersecting as in Figure~\ref{figure:K2-E}(A). We consider a pencil of cubics $\{\lambda(L_1+L_2+L_3) + \mu(A+B) \mid [\lambda:\mu] \in \mathbb{CP}^1 \}$ in $\mathbb{CP}^2$ generated by two cubic curves $L_1+L_2+L_3$ and $A+B$, which has $4$ base points, say, $p$, $q$, $r$ and $s$. In order to obtain an elliptic fibration over $\mathbb{CP}^1$ from the pencil, we blow up three times at $p$ and $r$, respectively, and twice at $s$, including infinitely near base-points at each point. We perform one further blowing-up at the base point $q$. By blowing-up totally nine times, we resolve all base points (including infinitely near base-points) of the pencil and we then get a rational elliptic surface $E(1)$ with an $I_8$-singular fiber, an $I_2$-singular fiber, two nodal singular fibers, and four sections; Figure~\ref{figure:K2-E}(B).

\begin{figure}[hbtb]
\centering
\subfloat[A pencil]{\includegraphics[scale=1]{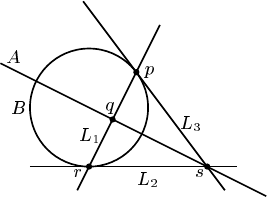}} \qquad \subfloat[$E(1)$]{\includegraphics[scale=0.7]{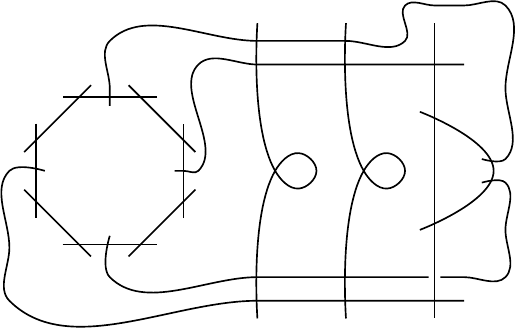}}
\caption{A rational elliptic surface $E(1)$}
\label{figure:K2-E}
\end{figure}

Let $Y$ be a double cover of the rational elliptic surface $E(1)$ branched along two general fibers. Then the surface $Y$ is an elliptic K3 surface with two $I_8$-singular fibers, two $I_2$-singular fibers, four nodal singular fibers, and four sections; Figure~\ref{figure:K3-surface-Y}(A). In the following construction we use only one $I_8$-singular fiber, one nodal singular fiber, and three sections; Figure~\ref{figure:K3-surface-Y}(B).

\begin{figure}[hbtb]
\centering
\subfloat[The fibration structure of $Y$]{\includegraphics[scale=0.6]{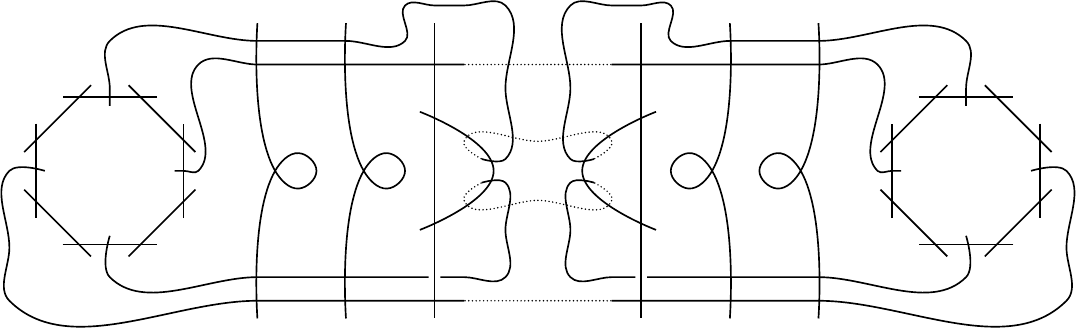}} \\ \subfloat[A part of $Y$]{\includegraphics[scale=0.7]{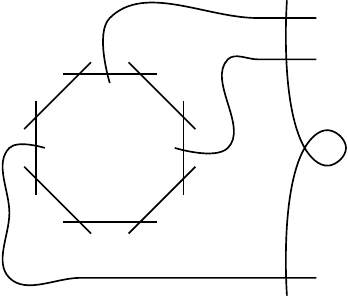}}
\caption{An elliptic K3 surface $Y$}
\label{figure:K3-surface-Y}
\end{figure}

Let $\tau : V \to Y$ be the blowing-up at the node of the nodal singular fiber and let $E$ be the exceptional divisor of $\tau$. Since $K_Y = 0$, we have $K_V = E$. Let $D = D_1 + \dotsb + D_6$ be the part of the $I_8$-singular fiber. Let $S_i$ ($i=1,2,3$) be the sections of the fibration $V \to \mathbb{CP}^1$ and set $S = S_1 + S_2 + S_3$. Let $F$ be the proper transform of the nodal fiber of the K3 surface $Y$; Figure~\ref{figure:K2-sc-V}.

\begin{figure}[hbtb]
\centering
\includegraphics[scale=0.7]{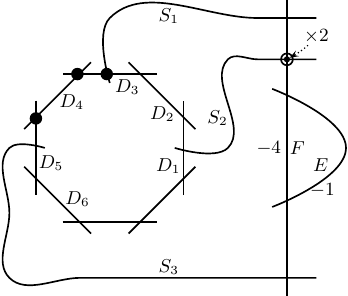}
\caption{A surface $V = \blup{Y}{}$}
\label{figure:K2-sc-V}
\end{figure}

We blow up the surface $V$ three times totally at the three marked points $\bullet$ and blow up twice at the marked point $\bigodot$. We then get a surface $Z$; Figure~\ref{figure:K2-sc-Z}. There exist three disjoint linear chains of ${\mathbb{CP}}^1$'s in $Z$:
\begin{equation*}
\uc{-3}-\uc{-6}-\uc{-2}-\uc{-3}-\uc{-2}, \quad \uc{-4}-\uc{-2}-\uc{-2}-\uc{-3}-\uc{-2}, \quad \uc{-4}.
\end{equation*}

\begin{figure}[hbtb]
\centering
\includegraphics[scale=0.7]{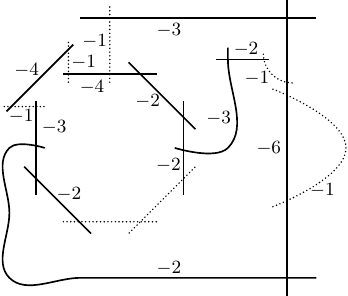}
\caption{A surface $Z = \blup{Y}{6}$}
\label{figure:K2-sc-Z}
\end{figure}

\subsection*{Main construction}

By applying $\mathbb{Q}$-Gorenstein smoothing theory to the surface $Z$ as in Y. Lee and J. Park~\cite{Lee-Park-K^2=2} and the authors~\cite{PPS-K3, PPS-K4}, we construct a complex surface of general type with $p_g=1$, $q=0$, and $K^2=2$. That is, we first contract the three chains of $\mathbb{CP}^1$'s from the surface $Z$ so that it produces a normal projective surface $X$ with three permissible singular points. In Section~\ref{section:obstruction=0} we will show that the singular surface $X$ has a global $\mathbb{Q}$-Gorenstein smoothing. Let $X_t$ be a general fiber of the $\mathbb{Q}$-Gorenstein smoothing of $X$. Since $X$ is a (singular) surface with $p_g=1$, $q=0$, and $K^2=2$, by applying general results of complex surface theory and $\mathbb{Q}$-Gorenstein smoothing theory, one may conclude that a general fiber $X_t$ is a complex surface of general type with $p_g=1$, $q=0$, and $K^2=2$. Furthermore, it is not difficult to show that a general fiber $X_t$ is minimal and simply connected by using a similar technique in Y. Lee and J. Park~\cite{Lee-Park-K^2=2} and the authors~\cite{PPS-K3, PPS-K4}.

\begin{remark}
Catanese and Debarre~\cite{CD} proved that surfaces of general type with $p_g=1$, $q=0$, and $K^2=2$ are divided into five types according to the degree and the image of the bicanonical map. Four of them are simply connected and the other one has a torsion $\mathbb{Z}/2\mathbb{Z}$. It is an interesting problem to determine in which class the simply connected example constructed in this section is contained. We leave this question for the future research.
\end{remark}

\subsection{An example with $K^2=2$ and $H_1 = \mathbb{Z}/2\mathbb{Z}$}
\label{sec:h_1=z_2}

We construct a minimal complex surface of general type with $p_g=1$, $q=0$, $K^2=2$, and $H_1 = \mathbb{Z}/2\mathbb{Z}$. Let $A$, $L_i$ ($i=1,2,3$) be lines on the projective plane $\mathbb{CP}^2$ and $B$ a nonsingular conic on $\mathbb{CP}^2$ which intersect as in Figure~\ref{figure:K2-z2-E}(A). Consider a pencil of cubics generated by the two cubics $A+B$ and $L_1+L_2+L_3$. By resolving the base points of the pencil of cubics including infinitely near base-points, we obtain a rational elliptic surface $E(1)$ with an $I_7$-singular fiber, an $I_2$-singular fiber, an cusp singular fiber, a nodal singular fiber, and five sections as in Figure~\ref{figure:K2-z2-E}(B).

\begin{figure}[hbtb]
\centering
\subfloat[A pencil]{\includegraphics[scale=1]{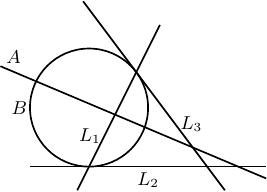}} \qquad
\subfloat[$E(1)$]{\includegraphics[scale=0.7]{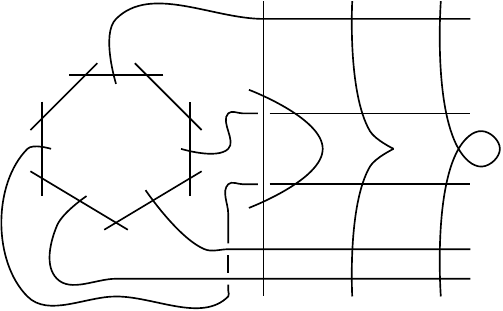}}
\caption{A rational elliptic surface $E(1)$ for $K^2=2$ and $H_1=\mathbb{Z}/2\mathbb{Z}$}
\label{figure:K2-z2-E}
\end{figure}

Let $Y$ be a double cover of the rational elliptic surface $E(1)$ branched along two general fibers $F_1$ and $F_2$ near the cusp singular fiber. Then $Y$ is an elliptic K3 surface with two $I_7$-singular fibers, two $I_2$-singular fibers, two cusp singular fibers, two nodal singular fibers, and five sections. We use only two $I_7$-singular fibers, two $I_2$-singular fibers, and two sections; Figure~\ref{figure:K2-z2}(A). We blow up eight times totally at the eight marked points $\bullet$. We then get a surface $Z$; Figure~\ref{figure:K2-z2}(B). There exist six disjoint linear chains of ${\mathbb{CP}}^1$'s in $Z$:
\begin{gather*}
\uc{-5}-\uc{-3}-\uc{-2}-\uc{-2}, \quad \uc{-5}-\uc{-3}-\uc{-2}-\uc{-2}, \quad \uc{-3}-\uc{-2}-\uc{-3}, \quad \uc{-3}-\uc{-2}-\uc{-3}, \quad \uc{-4}, \quad \uc{-4}
\end{gather*}

\begin{figure}[hbtb]
\centering \subfloat[$Y$]{\includegraphics[scale=0.7]{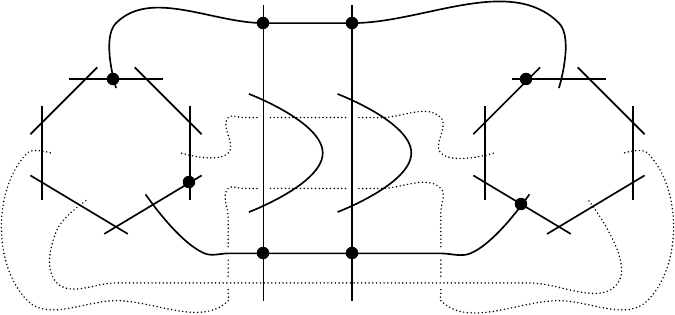}} \\
\centering \subfloat[$Z = \blup{Y}{8}$]{\includegraphics[scale=0.7]{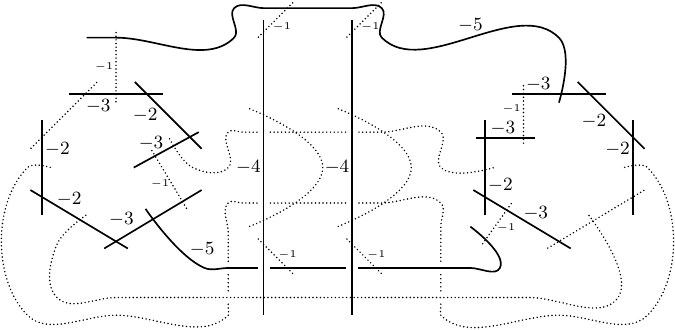}}
\caption{An example with $H_1 = \mathbb{Z}/2\mathbb{Z}$}
\label{figure:K2-z2}
\end{figure}

We now briefly explain how to prove that the complex surface obtained by the above configuration has $H_1 = \mathbb{Z}/2\mathbb{Z}$. For proving this one may apply a similar method in Y. Lee and J. park~\cite{Lee-Park-H1=Z2} and the authors~\cite{PPS-Z2}. The key part of the method is finding a basis for the free abelian group $H_2(Z, \mathbb{Z})$ and calculating how the elements of the basis intersect with the contracted rational curves. Then the proof of being $H_1 = \mathbb{Z}/2\mathbb{Z}$ follows because the configuration of the contracted rational curves are very special. A basis for $H_2(Z, \mathbb{Z})$ in our case can be obtained as follows: Since the K3 surface $Y$ may be regarded as a $4$-manifold obtained by applying a fiber sum surgery from the rational elliptic surface $E(1)$ along two general fibers near the cusp singular fiber, the homology $H_2(Z, \mathbb{Z})$ are generated by 18 spheres induced from a basis for $H_1(E(1),\mathbb{Z})$, four new homology elements emerging during the fiber sum surgery, and the eight $-1$-curves induced from eight blowing-ups on $Y$; cf.~\cite[p.\thinspace70]{Gompf-Stipsicz}. It is not difficult to calculate how the 18 spheres induced from a basis for $H_1(E(1),\mathbb{Z})$ intersect with the contracted rational curves because a basis for $H_1(E(1),\mathbb{Z})$ can be represented by linear combinations of $-2$-curves consisting of the $I_7$-singular fiber and the five sections. Furthermore the four new homology elements emerging during the fiber sum surgery do not intersect the contracted rational curves. Hence one can apply a similar method in Y. Lee and J. park~\cite{Lee-Park-H1=Z2} and the authors~\cite{PPS-Z2} to show that the complex surface obtained by the above configuration has $H_1 = \mathbb{Z}/2\mathbb{Z}$.

\section{The Existence of $\mathbb{Q}$-Gorenstein smoothings}
\label{section:obstruction=0}

This section is devoted to the proof of the following theorem.

\begin{theorem}
\label{theorem:Main-Q-Gorenstein}
The singular surface $X$ constructed in the main construction in Section~\ref{section:construction} has a global $\mathbb{Q}$-Gorenstein smoothing.
\end{theorem}

For this, we first briefly review some basic facts on $\mathbb{Q}$-Gorenstein smoothing theory for normal projective surfaces with special quotient singularities (refer to Y. Lee and J. Park~\cite{Lee-Park-K^2=2} for details).

\begin{definition}
Let $X$ be a normal projective surface with quotient singularities. Let $\mathcal{X}\to\Delta$ (or $\mathcal{X}/\Delta$) be a flat family of projective surfaces over a small disk $\Delta$. The one-parameter family of surfaces $\mathcal{X}\to\Delta$ is called a \emph{$\mathbb{Q}$-Gorenstein smoothing} of $X$ if it satisfies the following three conditions;

\begin{enumerate}[(i)]
\item the general fiber $X_t$ is a smooth projective surface,

\item the central fiber $X_0$ is $X$,

\item the canonical divisor $K_{\mathcal{X}/\Delta}$ is $\mathbb{Q}$-Cartier.
\end{enumerate}
\end{definition}

A $\mathbb{Q}$-Gorenstein smoothing for a germ of a quotient singularity $(X_0, 0)$ is defined similarly. A quotient singularity which admits a $\mathbb{Q}$-Gorenstein smoothing is called a \emph{singularity of class T}.

\begin{proposition}[\cite{KSB, Manetti}]
Let $(X_0, 0)$ be a germ of two dimensional quotient singularity. If $(X_0, 0)$ admits a $\mathbb{Q}$-Gorenstein smoothing over the disk, then $(X_0, 0)$ is either a rational double point or a cyclic quotient singularity of type \mbox{$\frac{1}{dn^2}(1, dna-1)$} for some integers $a, n, d$ with $a$ and $n$ relatively prime.
\end{proposition}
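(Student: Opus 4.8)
The plan is to realize the total space of the smoothing as a three-dimensional \emph{terminal} singularity and then to invoke the Mori--Reid classification of such singularities. Write $\cX \to \Delta$ for the germ of a $\mQ$-Gorenstein smoothing of $(X_0, 0)$, with $t$ the coordinate on the disk $\Delta$. Then $\cX$ is a normal three-fold, the central fibre $X_0 = \{t = 0\}$ is the Cartier divisor $\operatorname{div}(t)$, and by hypothesis $K_{\cX}$ (equivalently $K_{\cX/\Delta}$, as $K_\Delta$ is trivial) is $\mQ$-Cartier. Since $X_0$ is a quotient singularity it is klt. The first goal is to show that $\cX$ itself is terminal at $0$; once this is known, the classification of three-dimensional terminal singularities reads off the two possibilities for $X_0$.

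For terminality I would argue by inversion of adjunction. Because $X_0$ is klt and occurs as a Cartier divisor in $\cX$ with $K_{\cX}+X_0$ being $\mQ$-Cartier, the pair $(\cX, X_0)$ is purely log terminal near $0$. Fix a log resolution $\pi \colon \widetilde{\cX} \to \cX$ and write $K_{\widetilde{\cX}} = \pi^* K_{\cX} + \sum_i a_i E_i$. Since the general fibre $X_t$ is smooth, the singular locus of $\cX$ lies in $X_0$, so every $\pi$-exceptional $E_i$ maps to a point of $X_0$; as $X_0 = \operatorname{div}(t)$ is Cartier, the function $t \circ \pi$ vanishes along each such $E_i$ to order $m_i \ge 1$, giving $\pi^* X_0 = \widetilde{X_0} + \sum_i m_i E_i$ with $m_i \ge 1$. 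Comparing this with the plt estimate $a_i - m_i = a(E_i; \cX, X_0) > -1$ obtained from adjunction yields $a_i > m_i - 1 \ge 0$ for every exceptional $E_i$. Hence all discrepancies of $\cX$ are positive and $\cX$ is terminal.

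It then remains to exploit the structure theory for terminal three-fold points. If $\cX$ has index one it is a compound Du Val (cDV) point; in particular $K_{\cX}$ is Cartier, so by adjunction $K_{X_0} = (K_{\cX} + X_0)|_{X_0}$ is Cartier and $X_0$ is a Gorenstein quotient singularity, that is, a rational double point. If $\cX$ has index $r \ge 2$, the classification presents it as a cyclic quotient $V/\mu_r$ of an index-one (hence cDV) terminal point $V$, with $\mu_r$ acting freely in codimension one and $t$ a $\mu_r$-invariant coordinate, so that $X_0 = V_0/\mu_r$ for the hyperplane section $V_0 = \{t = 0\} \subset V$. The requirement that $X_0$ be a surface quotient singularity forces $V$ to lie in the main ($cA/r$) series, whence $V_0$ is an $A_{dn-1}$ singularity and the induced $\mu_r$-action (with $r = n$) has weights $\tfrac{1}{n}(1, -1, a)$; computing the quotient of $A_{dn-1}$ by this action via Hirzebruch--Jung continued fractions gives exactly the cyclic singularity $\tfrac{1}{dn^2}(1, dna-1)$ with $\gcd(a, n) = 1$.

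The main obstacle is the index $r \ge 2$ case. One must rule out the possibilities that $V$ is of type cD or cE, or that $\cX$ belongs to one of the exceptional series (cAx/r, cD/r, cE/r, and so on) of Mori's list, since in those cases the section $V_0/\mu_r$ fails to be a cyclic surface quotient singularity of the stated form. The cleanest way to carry this out is to use that $X_0$ is a quotient singularity, so its local class group is the abelianized local fundamental group, together with the fact that the local class group of a terminal three-fold point is cyclic of order equal to its index; matching these constrains the action to the main $cA/r$ series and forces $X_0$ to be cyclic. Pinning down the exponents so that the type is precisely $\tfrac{1}{dn^2}(1, dna-1)$ with $a$ coprime to $n$ is then the remaining bookkeeping.
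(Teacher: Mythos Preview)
The paper does not prove this proposition at all: it is quoted verbatim from Koll\'ar--Shepherd-Barron \cite{KSB} and Manetti \cite{Ma} as background, with no argument supplied. So there is no ``paper's own proof'' to compare against.

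That said, your sketch is essentially the argument of \cite{KSB}. The key step---that the three-fold total space $\cX$ is terminal---is exactly their Theorem~3.5 (or its proof), and your inversion-of-adjunction argument is a clean modern way to see it: $X_0$ klt and Cartier forces $(\cX,X_0)$ plt, and then $m_i\ge 1$ upgrades plt to terminal for $\cX$. The index-one case is then immediate from Reid's characterisation of Gorenstein terminal three-folds as isolated cDV points. For index $r\ge 2$ your outline is correct but, as you yourself flag, incomplete: one must pass to the canonical cyclic cover, identify the hyperplane section $V_0$ as an $A_{dn-1}$ point with the $\mu_n$-action of weights $\tfrac{1}{n}(1,-1,a)$, and check that the other terminal series (cAx/$r$, cD/$r$, cE/$r$) do not produce surface quotient singularities. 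In \cite{KSB} this is done in Proposition~3.10 by a direct analysis of which cyclic quotients of Du~Val points are again quotient singularities; your proposed shortcut via local class groups is plausible but would need to be made precise (in particular, matching the class group of $X_0$ with the index of $\cX$ is the content of their Lemma relating the indices of $X_0$ and $\cX$, not an independent input). So: right strategy, faithful to the cited sources, with the honest caveat that the higher-index case still requires the casework you gesture at.
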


\begin{proposition}[\cite{KSB, Manetti}]

\begin{enumerate}[(1)]
\item The singularities ${\overset{-4}{\circ}}$ and ${\overset{-3}{\circ}}-{\overset{-2}{\circ}}-{\overset{-2}{\circ}}-\cdots- {\overset{-2}{\circ}}-{\overset{-3}{\circ}}$ are of class $T$.

\item If the singularity ${\overset{-b_1}{\circ}}-\cdots-{\overset{-b_r}{\circ}}$ is of class $T$, then so are
\[{\overset{-2}{\circ}}-{\overset{-b_1}{\circ}}-\cdots-{\overset{-b_{r-1}} {\circ}}- {\overset{-b_r-1}{\circ}} \quad\text{and}\quad  {\overset{-b_1-1}{\circ}}-{\overset{-b_2}{\circ}}-\cdots- {\overset{-b_r}{\circ}}-{\overset{-2}{\circ}}.\]

\item Every singularity of class $T$ that is not a rational double point can be obtained by starting with one of the singularities described in $(1)$ and iterating the steps described in $(2)$.
\end{enumerate}
\end{proposition}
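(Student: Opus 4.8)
The plan is to prove the three parts together by using the closed-form classification as a bridge: by the preceding proposition a non-Du-Val quotient singularity is of class $T$ only if it is a cyclic quotient singularity of type $\frac{1}{dn^2}(1,dna-1)$ with $\gcd(a,n)=1$, so it remains to establish the converse (that every such singularity really is smoothable) and to match the arithmetic family $\frac{1}{dn^2}(1,dna-1)$ with the chains generated by (1) and (2). The Hirzebruch--Jung continued fraction $\frac{dn^2}{dna-1}=[b_1,\dots,b_r]$ will be the main combinatorial tool throughout.

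First I would prove a uniform existence statement: for every $(d,n,a)$ with $\gcd(a,n)=1$ the singularity $\frac{1}{dn^2}(1,dna-1)$ admits a $\mQ$-Gorenstein smoothing. The singularity has index $n$, and its canonical (index-one) cover is the Du Val singularity $A_{dn-1}=\{xy=z^{dn}\}$, on which the residual group $\mu_n$ acts by $(x,y,z)\mapsto(\eta x,\eta^{-1}y,\eta^{a}z)$ with $\eta=e^{2\pi i/n}$. I take the $\mu_n$-invariant deformation $\{xy=\prod_{j=1}^{d}(z^{n}-\beta_j)\}$; for generic distinct nonzero $\beta_j$ the total space is a smooth hypersurface, and $\mu_n$ acts freely on every fiber because its only fixed locus $x=y=z=0$ never meets a fiber. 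Passing to the $\mu_n$-quotient yields a flat family whose special fiber is $\frac{1}{dn^2}(1,dna-1)$, whose general fiber is smooth, and whose relative canonical divisor is $\mQ$-Cartier, being the descent of the Cartier canonical class of the Gorenstein hypersurface total space. Together with the evident $\mQ$-Gorenstein smoothability of Du Val singularities, this gives sufficiency.

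With this construction in hand, parts (1) and (2) become arithmetic checks. For (1) a continued-fraction computation identifies $\uc{-4}$ with $\frac{1}{4}(1,1)$ (the case $d=1$, $n=2$, $a=1$) and the chain $\uc{-3}-\uc{-2}-\cdots-\uc{-2}-\uc{-3}$ with $d-2$ central $(-2)$-curves with $\frac{1}{4d}(1,2d-1)$ (the case $n=2$, $a=1$); both are of class $T$ by the previous step. For (2) I would verify the continued-fraction identity showing that prepending $\uc{-2}$ while raising the last entry by $1$, and symmetrically raising the first entry by $1$ while appending $\uc{-2}$, carry the expansion of $\frac{dn^2}{dna-1}$ to the expansion of $\frac{d(n')^2}{dn'a'-1}$ for suitable $(n',a')$ with the same $d$ and $\gcd(a',n')=1$; the output therefore again has admissible type and is of class $T$.

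Finally, part (3) is the completeness statement, proved by induction on the chain length $r$ with $d$ held fixed (each operation of (2) raises $r$ by exactly one, and the base cases of (1) are the unique shortest chains for each $d$). For the inductive step I must show that the expansion of $\frac{dn^2}{dna-1}$, whenever it is not a base case, necessarily begins or ends with $2$, and that stripping that entry while lowering its neighbour by $1$---the inverse of one operation in (2)---returns the expansion of a singularity of the same admissible type with strictly smaller length. I expect this last arithmetic step to be the main obstacle: one must prove that the reduction always applies, keeps $d$ fixed, strictly shortens the chain, and preserves coprimality, which amounts to a careful analysis of the Euclidean algorithm underlying the Hirzebruch--Jung expansion of $\frac{dn^2}{dna-1}$. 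Matching this recursive generation with the closed-form family is the genuine content of the proposition.
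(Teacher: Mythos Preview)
The paper does not prove this proposition at all: it is quoted verbatim from Koll\'ar--Shepherd-Barron and Manetti with the citation \cite{KSB, Ma} and no argument, as background for the obstruction computation in Section~\ref{section:obstruction=0}. So there is no in-paper proof to compare against.

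That said, your sketch is essentially the original argument from \cite{KSB}. The explicit $\mu_n$-equivariant smoothing $xy=\prod_{j=1}^{d}(z^{n}-\beta_j)$ of the index-one cover is exactly the construction in \cite[3.9]{KSB}, and your justification of $\mQ$-Cartier relative canonical class via descent from the Gorenstein hypersurface total space is the standard one. The continued-fraction identifications in (1) are correct: $\uc{-4}=\tfrac{1}{4}(1,1)$ and $[3,2^{\,d-2},3]=\tfrac{1}{4d}(1,2d-1)$. For (2) the two operations do preserve $d$ and correspond, on the arithmetic side, to the substitutions $(n,a)\mapsto(n+a,a)$ and $(n,a)\mapsto(n+(n-a),n-a)$; the verification is a one-line Hirzebruch--Jung identity. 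For (3), the point you flag as the ``main obstacle''---that every non-base $T$-chain has a $2$ at one end---follows from the identity $\sum_{i}(b_i-2)=r-2+d$ (equivalently $\sum b_i=3r+2-d$), which forces $b_1=2$ or $b_r=2$ once $r$ exceeds the length of the base chain for the given $d$; the reverse operation then visibly inverts one of the moves in (2), and the Euclidean-type recursion on $(n,a)$ terminates at $n=2$, $a=1$. So your plan is sound and matches the literature, but be aware that in the context of this paper no proof is expected: the proposition is simply being invoked.
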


Let $X$ be a normal projective surface with singularities of class $T$. Due to the result of Koll\'ar and Shepherd-Barron~\cite{KSB}, there is a $\mathbb{Q}$-Gorenstein smoothing locally for each singularity of class $T$ on $X$. The natural question arises whether this local $\mathbb{Q}$-Gorenstein smoothing can be extended over the global surface $X$ or not. Roughly geometric interpretation is the following: Let $\cup_{\alpha} V_{\alpha}$ be an open covering of $X$ such that each $V_{\alpha}$ has at most one singularity of class $T$. By the existence of a local $\mathbb{Q}$-Gorenstein smoothing, there is a $\mathbb{Q}$-Gorenstein smoothing $\mathcal{V}_{\alpha}/\Delta$. The question is if these families glue to a global one. The answer can be obtained by figuring out the obstruction map of the sheaves of deformation $T^i_X=Ext^i_X(\Omega_X,\sheaf{O_X})$ for $i=0,1,2$. For example, if $X$ is a smooth surface, then $T^0_X$ is the usual holomorphic tangent sheaf $T_X$ and $T^1_X=T^2_X=0$. By applying the standard result of deformations to a normal projective surface with quotient singularities, we get the following

\begin{proposition}[{Wahl~\cite[\S4]{Wahl}}]

Let $X$ be a normal projective surface with quotient singularities. Then
\begin{enumerate}[(1)]
\item The first order deformation space of $X$ is represented by the global Ext 1-group $\mathbb{T}^1_X=\Ext^1_X(\Omega_X, \sheaf{O_X})$.

\item The obstruction lies in the global Ext 2-group $\mathbb{T}^2_X=\Ext^2_X(\Omega_X, \sheaf{O_X})$.
\end{enumerate}
\end{proposition}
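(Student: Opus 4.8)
The plan is to deduce the proposition from the general tangent--obstruction formalism for deformations of schemes, which is phrased through the cotangent complex, and then to replace that complex by the sheaf of K\"ahler differentials $\Omega_X$ using that $X$ has only isolated Cohen--Macaulay singularities. The general input I would invoke is standard: for any separated $\mC$-scheme of finite type, infinitesimal deformations over Artinian bases are governed by Illusie's cotangent complex $\mathbb{L}_X=\mathbb{L}_{X/\mC}$ (equivalently, in degrees $\le 2$, by the Lichtenbaum--Schlessinger cotangent functors). Concretely, first-order deformations of $X$ over $\mC[\epsilon]$ are classified up to isomorphism by $\Ext^1_X(\mathbb{L}_X,\cO_X)$, and the obstruction to extending a given deformation across a small extension of Artinian bases lies in $\Ext^2_X(\mathbb{L}_X,\cO_X)$. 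The entire content of the proposition is then the identification $\Ext^i_X(\mathbb{L}_X,\cO_X)\cong\Ext^i_X(\Omega_X,\cO_X)=\T^i_X$ for $i=1,2$.

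To carry out this identification I would use the truncation triangle
\[
\tau_{\leq -1}\mathbb{L}_X \longrightarrow \mathbb{L}_X \longrightarrow \Omega_X \overset{+1}{\longrightarrow},
\]
which is legitimate since $\mathcal{H}^0(\mathbb{L}_X)=\Omega_X$ and $\mathcal{H}^{>0}(\mathbb{L}_X)=0$. Applying derived $\Hom(-,\cO_X)$ yields the long exact sequence
\[
\cdots \to \Ext^{i-1}_X(\tau_{\leq -1}\mathbb{L}_X,\cO_X) \to \Ext^i_X(\Omega_X,\cO_X) \to \Ext^i_X(\mathbb{L}_X,\cO_X) \to \Ext^i_X(\tau_{\leq -1}\mathbb{L}_X,\cO_X) \to \cdots,
\]
so the natural map $\T^i_X=\Ext^i_X(\Omega_X,\cO_X)\to\Ext^i_X(\mathbb{L}_X,\cO_X)$ is an isomorphism in every degree $i$ for which both flanking groups $\Ext^{i-1}_X(\tau_{\leq -1}\mathbb{L}_X,\cO_X)$ and $\Ext^i_X(\tau_{\leq -1}\mathbb{L}_X,\cO_X)$ vanish. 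It therefore suffices to prove that $\Ext^j_X(\tau_{\leq -1}\mathbb{L}_X,\cO_X)=0$ for $j\le 2$.

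This vanishing is where the quotient singularities enter, and it is the step I expect to require the most care. Since $X$ is a normal surface it is smooth in codimension one, so $\mathbb{L}_X$ agrees with $\Omega_X$ away from the finite singular set $\Sigma$; hence every cohomology sheaf $\mathcal{H}^{-k}(\mathbb{L}_X)$ with $k\ge 1$ is coherent with $0$-dimensional support contained in $\Sigma$. At each point of $\Sigma$ the local ring $\cO_X$ is Cohen--Macaulay of depth $2$ (quotient singularities are rational, hence Cohen--Macaulay), so the grade estimate gives $\mathcal{E}xt^p_X(\mathcal{F},\cO_X)=0$ for $p<2$ whenever $\mathcal{F}$ is coherent with $0$-dimensional support, and the same then holds for the global $\Ext^p_X(\mathcal{F},\cO_X)$ since these sheaves are again supported on $\Sigma$. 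Feeding the cohomology sheaves of $\tau_{\leq -1}\mathbb{L}_X$ into the hyper-$\Ext$ spectral sequence $E_2^{p,q}=\Ext^p_X(\mathcal{H}^{-q}(\tau_{\leq -1}\mathbb{L}_X),\cO_X)\Rightarrow\Ext^{p+q}_X(\tau_{\leq -1}\mathbb{L}_X,\cO_X)$, the grade bound forces $p\ge 2$ while the truncation forces $q\ge 1$, so no nonzero term sits in total degree $p+q\le 2$. Hence $\Ext^j_X(\tau_{\leq -1}\mathbb{L}_X,\cO_X)=0$ for $j\le 2$, as required.

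Combining the last two paragraphs, the comparison map is an isomorphism in degrees $1$ and $2$, so the general classification transports verbatim: the first-order deformation space of $X$ is $\Ext^1_X(\Omega_X,\cO_X)=\T^1_X$, giving $(1)$, and obstructions lie in $\Ext^2_X(\Omega_X,\cO_X)=\T^2_X$, giving $(2)$. As a byproduct, the local-to-global spectral sequence $E_2^{p,q}=H^p(X,\mathcal{E}xt^q_X(\Omega_X,\cO_X))\Rightarrow\T^{p+q}_X$ then splits the two sources of deformations, namely $H^1(X,\Theta_X)$ coming from moving the complex structure and $H^0(X,\mathcal{E}xt^1_X(\Omega_X,\cO_X))$ coming from locally smoothing the singular points; this is the form in which the statement is used in the sequel. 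Apart from the grade-plus-shift bookkeeping above, the only genuinely delicate foundational point is that the Lichtenbaum--Schlessinger functors correctly model $\tau_{\geq -2}\mathbb{L}_X$, so that reading off first-order deformations and obstructions from $\Ext^1$ and $\Ext^2$ of the cotangent complex is legitimate in exactly these degrees; the remaining homological steps are formal.
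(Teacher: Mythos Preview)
The paper does not prove this proposition: it is quoted verbatim from Wahl~\cite{Wa} with a bare citation and no argument, and is used thereafter as a black box. So there is no ``paper's own proof'' to compare against.

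Your argument is a correct way to recover the statement from the cotangent-complex formalism. The reduction via the truncation triangle to the vanishing of $\Ext^{\le 2}_X(\tau_{\le -1}\mathbb{L}_X,\cO_X)$, combined with the Ischebeck-type bound $\mathcal{E}xt^p_X(\mathcal F,\cO_X)=0$ for $p<2$ whenever $\mathcal F$ is coherent with $0$-dimensional support on a Cohen--Macaulay surface, is exactly the right mechanism, and the spectral-sequence bookkeeping ($p\ge 2$, $q\ge 1$, hence $p+q\ge 3$) is accurate. One small point worth making explicit: since $\mathbb{L}_X$ is a priori unbounded below, the convergence of the hyper-$\Ext$ spectral sequence deserves a word; but here it is harmless, because one can instead peel off $\mathcal{H}^{-1}(\mathbb{L}_X)[1]$ from $\tau_{\le -1}\mathbb{L}_X$ by a further truncation triangle and iterate, so the vanishing in degrees $\le 2$ follows from finitely many applications of the grade bound with no limit argument. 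Your closing remark about the local-to-global spectral sequence is also correct and matches exactly how the paper exploits the proposition in the statement that immediately follows.
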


Furthermore, by applying the general result of local-global spectral sequence of ext sheaves  to deformation theory of surfaces with quotient singularities so that $E_2^{p, q}=H^p(T^q_X) \Rightarrow \mathbb{T}^{p+q}_X$, and by $H^j(T^i_X)=0$ for $i, j\ge 1$, we also get

\begin{proposition}[Manetti~\cite{Manetti}, Wahl~\cite{Wahl}]
Let $X$ be a normal projective surface with quotient singularities. Then

\begin{enumerate}[(1)]
\item We have the exact sequence
\[0\to H^1(T^0_X)\to \mathbb{T}^1_X\to \ker [H^0(T^1_X)\to H^2(T^0_X)]\to 0\]
where $H^1(T^0_X)$ represents the first order deformations of $X$ for which the singularities remain locally a product.

\item If $H^2(T^0_X)=0$, every local deformation of the singularities may be globalized.
\end{enumerate}
\end{proposition}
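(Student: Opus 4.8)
The plan is to derive both statements from the local-to-global spectral sequence of $\Ext$ sheaves
\[
E_2^{p,q} = H^p(X, T^q_X) \Rightarrow \T^{p+q}_X = \Ext^{p+q}_X(\Omega_X, \cO_X),
\]
exploiting the two vanishing facts recorded just above the statement. First, since $X$ has only quotient singularities, for $q \ge 1$ the sheaf $T^q_X = \mathcal{E}xt^q_X(\Omega_X, \cO_X)$ is supported on the finite singular locus, so $H^p(T^q_X) = 0$ for all $p \ge 1$; thus the only surviving entries with $q \ge 1$ lie in the column $p = 0$, namely $H^0(T^1_X)$ and $H^0(T^2_X)$. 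Second, because $X$ is a surface, $H^p(T^0_X) = 0$ for $p \ge 3$. Consequently the only possibly nonzero differential affecting total degrees $1$ and $2$ is $d_2 \colon E_2^{0,1} \to E_2^{2,0}$, i.e.\ $H^0(T^1_X) \to H^2(T^0_X)$; every other relevant differential has zero source or target.

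For part (1) I would read off the filtration on $\T^1_X$. Its associated graded pieces in total degree $1$ are $E_\infty^{1,0}$ and $E_\infty^{0,1}$. No differential touches $E_2^{1,0} = H^1(T^0_X)$, so $E_\infty^{1,0} = H^1(T^0_X)$ survives as a subobject, while $E_\infty^{0,1} = \ker[\,d_2 \colon H^0(T^1_X) \to H^2(T^0_X)\,]$ is the quotient. The filtration then yields exactly
\[
0 \to H^1(T^0_X) \to \T^1_X \to \ker[H^0(T^1_X) \to H^2(T^0_X)] \to 0.
\]
The identification of the subobject $H^1(T^0_X)$ with the locally-trivial (equisingular) first-order deformations is the standard interpretation of the bottom-row edge map and I would simply cite it.

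For part (2) the key is to compare the global deformation functor $\mathrm{Def}_X$ with the product $\prod_{p} \mathrm{Def}_{(X,p)}$ over the singular points; on tangent and obstruction spaces this comparison is the edge map $\T^1_X \to H^0(T^1_X) = \bigoplus_p T^1_{(X,p)}$ and $\T^2_X \to H^0(T^2_X) = \bigoplus_p T^2_{(X,p)}$ respectively (the direct-sum decompositions hold because $T^i_X$ is a skyscraper sheaf for $i \ge 1$). When $H^2(T^0_X) = 0$ the differential $d_2$ is zero, so the sequence of part (1) shows $\T^1_X \to H^0(T^1_X)$ is surjective; simultaneously the degree-$2$ filtration degenerates, since $E_\infty^{1,1} = H^1(T^1_X) = 0$, $E_\infty^{2,0} = \operatorname{coker}[H^0(T^1_X) \to H^2(T^0_X)] = 0$, and $E_\infty^{0,2} = H^0(T^2_X)$, giving an isomorphism $\T^2_X \cong H^0(T^2_X)$ and in particular injectivity of the obstruction comparison map. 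I would then invoke the standard smoothness criterion for morphisms of deformation functors (surjective on tangent spaces together with injective on obstruction spaces implies smooth) to conclude that $\mathrm{Def}_X \to \prod_p \mathrm{Def}_{(X,p)}$ is smooth, hence surjective; therefore any prescribed local deformation is induced by a global one.

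The hard part will be the last paragraph rather than the spectral-sequence bookkeeping. One must (i) justify that the edge maps of the spectral sequence genuinely coincide with the natural comparison maps of the deformation functors — this requires identifying $H^0(T^i_X)$ with $\bigoplus_p T^i_{(X,p)}$ and checking functoriality of the obstruction map — and (ii) pass from formal smoothness of the functor morphism to an actual smoothing over the disk $\Delta$, which needs a convergence/algebraization step. Both are classical results of Wahl and Manetti, so in a write-up I would cite them for these two points and present in full only the purely mechanical degeneration argument of the first three paragraphs.
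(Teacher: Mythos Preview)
Your approach is correct and is exactly the one indicated in the paper: the proposition is quoted from \cite{Ma, Wa} and the only justification given is the local-to-global spectral sequence $E_2^{p,q}=H^p(T^q_X)\Rightarrow \T^{p+q}_X$ together with the vanishing $H^j(T^i_X)=0$ for $i,j\ge 1$. You have simply written out the spectral-sequence bookkeeping and the smoothness-of-functors argument that the paper leaves to the cited references.
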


As mentioned above, there is a local $\mathbb{Q}$-Gorenstein smoothing for each singularity of $X$ due to the result of Koll\'ar and Shepherd-Barron~\cite{KSB}. Hence it remains to show that every local deformation of the singularities can be globalized. Note that the following proposition tells us a sufficient condition for the existence of a global $\mathbb{Q}$-Gorenstein smoothing of $X$.

\begin{proposition}[Y. Lee and J. Park~\cite{Lee-Park-K^2=2}]
\label{proposition:sufficient_H^2=0}
Let $X$ be a normal projective surface with singularities of class $T$. Let $\pi : V \to X$ be the minimal resolution and let $A$ be the reduced exceptional divisor. Suppose that $H^2(T_V(-\log{A}))=0$. Then $H^2(T^0_X)=0$ and there is a $\mathbb{Q}$-Gorenstein smoothing of $X$.
\end{proposition}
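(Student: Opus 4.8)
The plan is to transfer the computation from the singular surface $X$ to its resolution $V$: I would compare the cohomology of the tangent sheaf $T^0_X=\mathcal{H}om(\Omega_X,\cO_X)$ on $X$ with that of the logarithmic tangent sheaf $T_V(-\log A)$ on $V$ through the Leray spectral sequence of $\pi\colon V\to X$, deduce the vanishing $H^2(T^0_X)=0$ from the hypothesis $H^2(T_V(-\log A))=0$, and then feed this vanishing into the deformation-theoretic propositions recorded above in order to produce the $\mQ$-Gorenstein smoothing.

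First I would establish the two local statements at the singular points. Since $X$ is smooth away from its finitely many class $T$ (hence cyclic quotient) singularities and $\pi$ is an isomorphism there, everything reduces to a neighborhood of a single quotient germ. Writing such a germ as $\mC^2/G$, one checks that a vector field on the resolution preserving the exceptional divisor $A$ is exactly one that descends to $\mC^2/G$, so that both $\pi_*T_V(-\log A)$ and $T^0_X$ are identified with the sheaf of $G$-invariant vector fields; this gives the identification $\pi_*T_V(-\log A)\cong T^0_X$. The more delicate half is the vanishing of the higher direct image $R^1\pi_*T_V(-\log A)=0$: because the fibers of $\pi$ have dimension $\le 1$ one already has $R^q\pi_*=0$ for $q\ge 2$, so only $q=1$ must be treated, and I would obtain this from the explicit Hirzebruch--Jung structure of the exceptional chain $A$ together with the formal-functions theorem (or by citing the corresponding computation for quotient singularities in \cite{Wa}). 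I expect this vanishing to be the main obstacle.

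With these local facts in hand, the Leray spectral sequence $E_2^{p,q}=H^p(X,R^q\pi_*T_V(-\log A))\Rightarrow H^{p+q}(V,T_V(-\log A))$ has only the row $q=0$ nonzero, so its edge maps give isomorphisms $H^p(X,T^0_X)\cong H^p(V,T_V(-\log A))$ for all $p$. Taking $p=2$ yields $H^2(T^0_X)\cong H^2(T_V(-\log A))$, which vanishes by hypothesis; this is the first assertion.

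Finally I would invoke the deformation theory quoted above. By Koll\'ar--Shepherd-Barron \cite{KSB} each class $T$ singularity of $X$ carries a local $\mQ$-Gorenstein smoothing, giving a distinguished section in $H^0(T^1_X)$ (the sheaf $T^1_X$ being supported at the singular points). By the proposition of Manetti--Wahl \cite{Ma, Wa}, the obstruction to globalizing these local deformations is the image under $H^0(T^1_X)\to H^2(T^0_X)$, and since we have just shown $H^2(T^0_X)=0$ this obstruction vanishes, so every local deformation globalizes. Because the condition that the relative canonical divisor be $\mQ$-Cartier is local on $X$ and holds for each local family, the resulting global deformation is again $\mQ$-Gorenstein, and it restricts to the smoothing of each singularity; this yields the desired global $\mQ$-Gorenstein smoothing of $X$.
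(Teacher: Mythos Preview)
The paper does not supply its own proof of this proposition: it is stated with the citation \cite{Lee-Park-K^2=2} and used as a black box. Your outline is essentially the standard argument that appears in that reference, namely the identification $\pi_*T_V(-\log A)\cong T^0_X$, the vanishing $R^1\pi_*T_V(-\log A)=0$ for quotient singularities, the collapse of the Leray spectral sequence, and then the appeal to the deformation-theoretic propositions of \cite{Ma,Wa,KSB} already quoted in the section. So there is nothing to compare against, and your plan matches the intended route.

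One small sharpening: in your final paragraph you phrase the obstruction as ``the image under $H^0(T^1_X)\to H^2(T^0_X)$'', but strictly speaking one also needs the higher-order obstructions in $\T^2_X$ to vanish. This follows because the local-global spectral sequence, together with $H^2(T^0_X)=0$ and $H^j(T^i_X)=0$ for $i,j\ge 1$, forces $\T^2_X\cong H^0(T^2_X)$, and $T^2_X=0$ since quotient singularities are local complete intersection quotients (or, more directly, are known to have unobstructed deformation spaces). You have the right ingredients; just make explicit that the formal smoothness, not only the first-order lifting, is what lets you pass from the local $\mQ$-Gorenstein smoothings to an actual one-parameter family.
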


Furthermore, the proposition above can be easily generalized to any log resolution of $X$ by keeping the vanishing of cohomologies under blowing up at the points. It is obtained by the following well-known result.

\begin{proposition} [{Flenner and Zaidenberg~\cite[\S1]{FZ}}]
Let $V$ be a nonsingular surface and let $A$ be a simple normal crossing divisor in $V$. Let $f : V' \to V$ be a blowing up of $V$ at a point p of $A$. Set $A'=f^{-1}(A)_{red}$. Then $h^2(T_{V'}(-\log{A'}))=h^2(T_V(-\log{A}))$.
\end{proposition}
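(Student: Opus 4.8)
The plan is to compare the two logarithmic tangent sheaves through a single natural morphism and then handle the two local shapes of $A$ at $p$ separately. Since $f$ is an isomorphism away from $p$ and $f^{-1}(A)_{red}\subseteq A'$, the differential $df$ carries vector fields tangent to $A'$ into vector fields tangent to $A$, and hence induces a sheaf homomorphism
\[
\phi\colon T_{V'}(-\log{A'})\longrightarrow f^{*}T_{V}(-\log{A}).
\]
The statement is local around $p$, so I would analyze $\phi$ in local coordinates, distinguishing the only two possibilities allowed by the simple normal crossing hypothesis on a surface: either $p$ is a smooth point of $A$ (one branch) or $p$ is a node of $A$ (two branches). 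Throughout I would use $\det T_{V}(-\log{A})=\cO_{V}(-K_{V}-A)$ together with $K_{V'}=f^{*}K_{V}+E$, where $E$ is the exceptional curve, and the multiplicity bookkeeping $A'=f^{*}A$ (smooth point) or $A'=f^{*}A-E$ (node).

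In the node case, $A$ has multiplicity $2$ at $p$, so $A'=f^{*}A-E$, and the determinant computation gives
\[
\det T_{V'}(-\log{A'})=\cO_{V'}(-K_{V'}-A')=f^{*}\cO_{V}(-K_{V}-A)=\det f^{*}T_{V}(-\log{A}).
\]
Thus $\det\phi$ is a nowhere-vanishing section of $\cO_{V'}$; as $\phi$ is generically an isomorphism, it is an isomorphism everywhere (a one-line check in the chart $x=u,\ y=uv$ with $A=\{xy=0\}$, where $u\partial_{u}\mapsto x\partial_{x}+y\partial_{y}$ and $v\partial_{v}\mapsto y\partial_{y}$, confirms this). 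Consequently $T_{V'}(-\log{A'})\cong f^{*}T_{V}(-\log{A})$, and by the projection formula together with $Rf_{*}\cO_{V'}=\cO_{V}$ one obtains $H^{i}(T_{V'}(-\log{A'}))\cong H^{i}(T_{V}(-\log{A}))$ for every $i$, in particular for $i=2$.

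In the smooth-point case one has $A'=f^{*}A$, and the same computation yields $\det T_{V'}(-\log{A'})=\det f^{*}T_{V}(-\log{A})\otimes\cO_{V'}(-E)$; hence $\phi$ is injective and degenerates to corank one exactly along $E\cong\mP^{1}$, with cokernel a line bundle $\mathcal{Q}$ supported on $E$. Working in the chart $x=st,\ y=t$ with $A=\{x=0\}$, the images $df(s\partial_{s})=x\partial_{x}$ and $df(t\partial_{t})=x\partial_{x}+y\partial_{y}$ show that the restriction of $\image\phi$ to $E$ is the constant subbundle spanned by the residual direction $x\partial_{x}$, so $\mathcal{Q}\cong\cO_{E}$. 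Since $H^{1}(\mP^{1},\cO)=H^{2}(\mP^{1},\cO)=0$, the cohomology sequence of
\[
0\longrightarrow T_{V'}(-\log{A'})\xrightarrow{\ \phi\ } f^{*}T_{V}(-\log{A})\longrightarrow\mathcal{Q}\longrightarrow0
\]
forces $H^{2}(T_{V'}(-\log{A'}))\cong H^{2}(f^{*}T_{V}(-\log{A}))$, and the right-hand side equals $H^{2}(T_{V}(-\log{A}))$ again by the projection formula and $Rf_{*}\cO_{V'}=\cO_{V}$. This gives $h^{2}(T_{V'}(-\log{A'}))=h^{2}(T_{V}(-\log{A}))$.

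The main obstacle is the smooth-point case: one must verify that the cokernel $\mathcal{Q}$ is exactly the trivial bundle $\cO_{E}$ (equivalently that $H^{1}(E,\mathcal{Q})=0$), which requires the explicit coordinate description of the logarithmic tangent sheaf and careful tracking of the canonical-bundle and multiplicity data. The node case is comparatively soft, reducing to the vanishing of the zero locus of $\det\phi$. I would organize the write-up so that both cases funnel into the same final step, namely the cohomology of the sequence above with $\mathcal{Q}$ either zero or $\cO_{E}$, which makes the equality of $h^{2}$ uniform and completes the proof.
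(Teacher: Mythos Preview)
The paper does not supply its own proof of this proposition; it simply quotes the result from Flenner--Zaidenberg~\cite[\S1]{FZ}. Your argument is a correct, self-contained verification and is essentially the standard one: build the natural map $\phi\colon T_{V'}(-\log A')\to f^{*}T_V(-\log A)$, compare determinants using $\det T_V(-\log A)=\cO_V(-K_V-A)$ and $K_{V'}=f^{*}K_V+E$, and conclude that $\phi$ is an isomorphism at a node of $A$ and has cokernel $\cO_E$ at a smooth point. The only place worth tightening is the identification $\mathcal{Q}\cong\cO_E$: the single chart $x=st,\ y=t$ does not by itself pin down the degree of $\mathcal{Q}$ on $E\cong\mP^1$. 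You should either check the second chart $x=u,\ y=uv$ (where one finds $u\partial_u\mapsto x\partial_x+y\partial_y$ and $\partial_v\mapsto u\partial_y$, again with image spanned by $x\partial_x$ along $E$), or note that $f^{*}T_V(-\log A)$ is trivial near $E$ and the image of $\phi$ along $E$ is the constant sub-line-bundle $\cO_E\cdot(x\partial_x)$, so the quotient is the constant $\cO_E\cdot\partial_y\cong\cO_E$. With that addition, the long exact sequence together with $Rf_{*}\cO_{V'}=\cO_V$ gives the asserted equality of $h^2$.
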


Therefore Theorem~\ref{theorem:Main-Q-Gorenstein} follows from Proposition~\ref{proposition:sufficient_H^2=0} and the following proposition:

\begin{proposition}
\label{proposition:H^2=0}
$H^2(T_V(-\log(D + S + F))) = H^0(\Omega_V(\log(D + S + F))(E)) = 0$.
\end{proposition}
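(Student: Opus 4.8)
The plan is to establish the two equalities in Proposition~\ref{proposition:H^2=0} in turn and then show the common value is zero. The first equality is pure Serre duality: since $V$ is a smooth projective surface, $T_V(-\log(D+S+F))$ is locally free of rank $2$, and its determinant is $\Omega^1_V \otimes \cdots$; more precisely one has $\det T_V(-\log B) = -K_V - B$ for a reduced divisor $B$, so
\[
H^2(T_V(-\log B)) \cong H^0\bigl(\Omega^1_V(\log B) \otimes \cO_V(K_V + \det(\Omega^1_V(\log B)))\bigr)^{\vee}
\]
and, using $K_V = E$, a short bookkeeping of the twisting sheaf yields $H^2(T_V(-\log(D+S+F))) \cong H^0(\Omega_V(\log(D+S+F))(E))^{\vee}$. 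So it suffices to prove the vanishing $H^0(\Omega_V(\log(D+S+F))(E)) = 0$.

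For the vanishing I would argue by contradiction: suppose $\omega$ is a nonzero global section of $\Omega^1_V(\log(D+S+F))(E)$, i.e.\ a logarithmic $1$-form with at worst log poles along $D+S+F$ and an extra pole allowed along the $(-1)$-curve $E$. The first step is to control the behavior near $E$. Since $E$ is a $(-1)$-curve, I would push everything down to the K3 surface $Y$ via $\tau : V \to Y$; a section of $\Omega^1_V(\log(D+S+F))(E)$ should descend (or be analyzed through the blow-down) to a logarithmic form on $Y$ with log poles along the images of $D+S+F$, possibly with a mild pole at the blown-up point that is killed because $\Omega^1_Y$ has no sections at all ($q(Y)=0$) and the residue/pole conditions are too restrictive. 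Concretely, the residue exact sequence
\[
0 \to \Omega^1_V(E) \to \Omega^1_V(\log(D+S+F))(E) \xrightarrow{\ \mathrm{Res}\ } \bigoplus \cO_{C_i}(E|_{C_i}) \to \cdots
\]
(summing over the irreducible components $C_i$ of $D+S+F$) reduces the problem to two things: (a) $H^0(\Omega^1_V(E)) = 0$, and (b) the residues of $\omega$ along each component $C_i$ must vanish, because each $C_i$ is a rational curve and $E|_{C_i}$ has degree $0$ or $1$ so $H^0(\cO_{C_i}(E|_{C_i}))$ is small, while the residues must satisfy the global compatibility (their sum in an appropriate sense is constrained by the fact that $D$ forms cycles/chains of the $I_8$ configuration and $S$, $F$ are sections/fibers). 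Part (a) follows since $H^0(\Omega^1_V(E)) \subseteq H^0(\Omega^1_V \otimes \cO_V(E))$ and $E$ being a $(-1)$-curve forces any such form to have no pole along $E$, hence to lie in $H^0(\Omega^1_V) = H^0(\Omega^1_Y) = 0$ as $q(V) = q(Y) = 0$.

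Once the residues are shown to vanish, $\omega$ extends to an honest section of $\Omega^1_V(E)$, and step (a) finishes the argument. The technical heart — and the step I expect to be the main obstacle — is the residue computation: one must verify that the compatibility conditions among the residues along the various components of $D$ (which meet in a chain, the partial $I_8$-fiber), the sections $S_1, S_2, S_3$, and the nodal fiber $F$ genuinely force all residues to be zero. This is where the specific geometry of the configuration in Figure~\ref{figure:K2-sc-V} enters: one writes down the dual graph of $D+S+F$, notes which intersection points are triple points, and uses that a closed logarithmic $1$-form's residues around any loop in the dual graph sum to zero together with the degree constraints from the twist by $E$. Because $D+S+F$ here is a tree-like configuration except for the loop contributed by $F$ meeting $E$ and the $I_8$-chain, the residue system has only the trivial solution. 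I would organize this as a lemma isolating the combinatorics of the dual graph, then conclude Proposition~\ref{proposition:H^2=0}, and hence Theorem~\ref{theorem:Q-Gorenstein}, immediately.
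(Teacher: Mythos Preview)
Your overall framework is the same one the paper uses: Serre duality reduces the first equality to the vanishing of $H^0(\Omega_V(\log(D+S+F))(E))$, and then the residue exact sequence, tensored by $\cO_V(E)$, together with $H^0(\Omega_V(E))=0$ reduces everything to understanding the connecting map
\[
d_1:\ \bigoplus_{i=1}^{6} H^0(\cO_{D_i})\ \oplus\ \bigoplus_{i=1}^{3} H^0(\cO_{S_i})\ \oplus\ H^0(\cO_F(E))\ \longrightarrow\ H^1(\Omega_V(E)).
\]
Up to here you and the paper agree.

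The gap is in your step~(b). You write that ``$E|_{C_i}$ has degree $0$ or $1$ so $H^0(\cO_{C_i}(E|_{C_i}))$ is small'', but this is false for $C_i=F$. The curve $F$ is the proper transform of a \emph{nodal} fiber under the blow-up at its node, so $F\cdot E=2$ and $\cO_F(E)\cong\cO_{\mP^1}(2)$; hence $h^0(\cO_F(E))=3$. The total residue target is therefore $12$-dimensional, with two ``extra'' dimensions beyond the constants. Your proposed mechanism for killing the residues---``a closed logarithmic $1$-form's residues around any loop in the dual graph sum to zero''---only constrains the \emph{constant} residues; it says nothing about the two nonconstant sections of $\cO_F(E)$, since after the twist by $E$ the residue along $F$ is a degree-$2$ function on $\mP^1$, not a scalar. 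So the combinatorial dual-graph argument you sketch cannot, even in principle, dispose of those two dimensions, and your proof of~(b) does not close.

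This is exactly the point where the paper's argument diverges from yours and becomes substantially harder. Rather than trying to force residues to vanish, the paper proves directly that $d_1$ is injective in two pieces. On the $10$-dimensional ``constant'' part $\bigoplus H^0(\cO_{D_i})\oplus\bigoplus H^0(\cO_{S_i})\oplus H^0(\cO_F)$ it identifies the composite $\beta_1\circ c_1$ with the first Chern class map into $H^1(\Omega_V)$ and checks that the intersection matrix of $D_i,S_j,F$ is nonsingular and that $E$ is independent of them (Lemma~\ref{lemma:composition}). For the two remaining dimensions in $H^0(\cO_F(E))/H^0(\cO_F)$ it shows, via a further diagram chase involving $H^0(\Omega_V(\log F)\otimes\cO_E(E))$ and a computation that $h^0(\Omega_V(\log F)\otimes\cO_E(E))=1$ (Lemmas~\ref{lemma:h^0=1} and~\ref{lemma:surjective}), that these two classes map onto $H^1(\Omega_V\otimes\cO_E(E))\cong\mC^2$ under $\gamma\circ d_1'$, hence are linearly independent from the image of the constant part (which lies in $\ker\gamma$). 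This is the genuine technical core of the proof, and your proposal does not contain a substitute for it.
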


The idea of the proof is as follows: There is an exact sequence of sheaves
\begin{equation*}
0 \to \Omega_V \to \Omega_V(\log(D+S+F)) \to \bigoplus_{i=1}^{6} \sheaf{O_{D_i}} \oplus \bigoplus_{i=1}^{3} \sheaf{O_{S_i}} \oplus \sheaf{O_F} \to 0.
\end{equation*}
By tensoring $\sheaf{O_V}(E)$, we have an exact sequence
\begin{equation*}
0 \to \Omega_V(E) \to \Omega_V(\log(D+S+F))(E) \to \bigoplus_{i=1}^{6} \sheaf{O_{D_i}} \oplus \bigoplus_{i=1}^{3} \sheaf{O_{S_i}} \oplus \sheaf{O_F}(E) \to 0.
\end{equation*}
Since $H^0(\Omega_V(E))=0$, the proof of Proposition~\ref{proposition:H^2=0} is done if we show that the connecting homomorphism
\begin{equation*}
\bigoplus_{i=1}^{6} H^0(\sheaf{O_{D_i}}) \oplus \bigoplus_{i=1}^{3} H^0(\sheaf{O_{S_i}}) \oplus H^0(\sheaf{O_F}(E)) \to H^1(\Omega_{V}(E))
\end{equation*}
is injective.

The proof of Proposition~\ref{proposition:H^2=0} begins with the following Lemma. We have a commutative diagram
\begin{equation}\label{equation:c_1-d_1-diagram}
\xymatrix{
\displaystyle  \bigoplus_{i=1}^{6} H^0(\sheaf{O_{D_i}}) \oplus \bigoplus_{i=1}^{3} H^0(\sheaf{O_{S_i}}) \oplus H^0(\sheaf{O_F}) \ar[r]^(.7){c_1} \ar[d]& H^1(\Omega_V) \ar[d]^{\beta_1} \\ %
\displaystyle  \bigoplus_{i=1}^{6} H^0(\sheaf{O_{D_i}}) \oplus \bigoplus_{i=1}^{3} H^0(\sheaf{O_{S_i}}) \oplus H^0(\sheaf{O_F}(E)) \ar[r]^(.7){d_1} & H^1(\Omega_V(E))
}
\end{equation}

\begin{lemma}
\label{lemma:composition}
The composition map
\begin{equation*}
\beta_1 \circ c_1 : \bigoplus_{i=1}^{6} H^0(\sheaf{O_{D_i}}) \oplus \bigoplus_{i=1}^{3} H^0(\sheaf{O_{S_i}}) \oplus H^0(\sheaf{O_F}) \to H^1(\Omega_{V}(E))
\end{equation*}
is injective.
\end{lemma}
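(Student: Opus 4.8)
The plan is to analyze the map $c_1$ (and then $\beta_1\circ c_1$) via the concrete description of the connecting homomorphism coming from the residue sequence
\[
0 \to \Omega_V \to \Omega_V(\log(D+S+F)) \to \bigoplus_{i=1}^{6} \sheaf{O_{D_i}} \oplus \bigoplus_{i=1}^{3} \sheaf{O_{S_i}} \oplus \sheaf{O_F} \to 0,
\]
where each summand $H^0(\sheaf{O_C})\cong\mathbb C$ for a curve $C\in\{D_i,S_i,F\}$ is generated by the constant function $1$, and its image under $c_1$ in $H^1(\Omega_V)$ is the cohomology class $[C]$ dual to $C$ (the first Chern class of $\sheaf{O_V}(C)$ under $H^1(\Omega_V)\cong H^{1,1}(V)$). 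So $c_1$ sends $(a_1,\dots,a_6,b_1,b_2,b_3,c)$ to $\sum a_i[D_i]+\sum b_j[S_j]+c[F]$. The first step is therefore to prove that the classes $[D_1],\dots,[D_6],[S_1],[S_2],[S_3],[F]$ are linearly independent in $H^2(V;\mathbb Q)$, i.e.\ that $c_1$ itself is injective; this is a pure intersection-theory computation, since independence follows if the $10\times 10$ intersection matrix of these curves on $V$ (which is readable off Figure~\ref{figure:K2-sc-V}: the chain $D_1\!-\!\cdots\!-\!D_6$ part of the $I_8$ fiber, the three sections $S_j$, and the proper transform $F$ of the nodal fiber meeting the exceptional curve $E$) has nonzero determinant, or more robustly if some submatrix does.

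Once $c_1$ is injective, the task reduces to controlling $\ker\beta_1$: from the diagram \eqref{equation:c_1-d_1-diagram}, $\beta_1\circ c_1$ is injective provided $\operatorname{im}(c_1)\cap\ker(\beta_1)=0$. The map $\beta_1\colon H^1(\Omega_V)\to H^1(\Omega_V(E))$ is the one induced by $\Omega_V\hookrightarrow\Omega_V(E)$; its kernel is the image of the previous connecting map in the long exact sequence of
\[
0\to\Omega_V\to\Omega_V(E)\to\Omega_V(E)|_E\to 0.
\]
Here $E$ is a $(-1)$-curve with $K_V=E$, so $\Omega_V(E)|_E$ and $H^0$ of it are small and explicitly computable; I expect $\ker\beta_1$ to be at most one-dimensional, spanned essentially by the class $[E]$ (or a closely related class). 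So the second step is to pin down $\ker\beta_1$ exactly — almost certainly $\ker\beta_1=\mathbb Q\cdot[E]$, using that $H^0(\Omega_V)=H^1(\sheaf O_V)=0$ (as $V$ is obtained from the K3 surface $Y$ by one blow-up, so $q(V)=0$) to kill the relevant boundary terms and identify the image of the connecting homomorphism with the line generated by the Chern class of $E$.

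The third step is then to check that $[E]\notin\operatorname{im}(c_1)$, i.e.\ that $[E]$ is not a $\mathbb Q$-linear combination $\sum a_i[D_i]+\sum b_j[S_j]+c[F]$. The cleanest way is to intersect a putative relation with well-chosen curves: $E$ has $E^2=-1$, whereas pairing $E$ with the relation and using $E\cdot D_i=0$ for all $i$ (the $I_8$-fiber piece is disjoint from $E$, since $E$ sits inside the blown-up nodal fiber), $E\cdot S_j$ = $0$ or $1$ depending on whether the sections pass through the blown-up point, and $E\cdot F=1$, forces a linear relation among the $a_i,b_j,c$ that, combined with self-intersection and mutual-intersection constraints, has no solution. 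Equivalently: augment the intersection matrix of the ten curves by the row/column for $E$ and observe the rank jumps, so $[E]$ is genuinely new. This augmented-matrix computation is the main obstacle — not because it is deep, but because it requires reading all the intersection numbers correctly from the configuration in Section~\ref{section:construction}, in particular exactly which of the three sections $S_j$ meet $E$ and how $F$ sits relative to the $I_8$-fiber chain after the blow-up $\tau\colon V\to Y$; getting one of these incidences wrong would break the argument. Assembling steps one through three: $c_1$ injective, $\ker\beta_1=\mathbb Q[E]$, and $[E]\notin\operatorname{im}(c_1)$ together give $\operatorname{im}(c_1)\cap\ker\beta_1=0$, hence $\beta_1\circ c_1$ is injective, proving the Lemma.
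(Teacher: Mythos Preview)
Your proposal is correct and follows essentially the same route as the paper: identify $c_1$ with the first Chern class map and prove injectivity via the invertibility of the $10\times 10$ intersection matrix of $D_i,S_j,F$; then identify $\ker\beta_1$ as the image of the connecting map $\delta\colon H^0(\Omega_V\otimes\sheaf{O_E}(E))\to H^1(\Omega_V)$, which the paper shows (by the same local computation you anticipate) is exactly the line $\mathbb{C}\cdot[E]$; and finally conclude from the independence of $[E]$ from the other ten classes. The paper dispatches your ``main obstacle'' in one line by noting that $E$ is the exceptional divisor of $\tau\colon V\to Y$, hence automatically independent in $H^1(\Omega_V)$ of the classes coming from $Y$ --- so your worry about reading off the precise incidences of $E$ with the $S_j$ is unnecessary.
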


\begin{proof}
Note that the map $c_1$ is the first Chern class map. Since the intersection matrix, whose entries are the intersection numbers of $D_i$ ($i=1,\dotsc,6$), $S_j$ ($j=1,2,3$), and $F$, is invertible, their images by the map $c_1$ are linearly independent. Therefore the map $c_1$ is injective.

Consider the commutative diagram
\begin{equation*}
\xymatrix{%
& 0 \ar[d] \\ %
& H^0(\Omega_V \otimes \sheaf{O_E}(E)) \ar[d]^{\delta} \\ %
\displaystyle  \bigoplus_{i=1}^{6} H^0(\sheaf{O_{D_i}}) \oplus \bigoplus_{i=1}^{3} H^0(\sheaf{O_{S_i}}) \oplus H^0(\sheaf{O_F}) \ar[r]^(.7){c_1} \ar[d]& H^1(\Omega_V) \ar[d]^{\beta_1} \\ %
\displaystyle  \bigoplus_{i=1}^{6} H^0(\sheaf{O_{D_i}}) \oplus \bigoplus_{i=1}^{3} H^0(\sheaf{O_{S_i}}) \oplus H^0(\sheaf{O_F}(E)) \ar[r]^(.7){d_1} & H^1(\Omega_V(E))
}
\end{equation*}
where the vertical sequence is induced from the exact sequence
\begin{equation*}
0 \to H^0(\Omega_V \otimes \sheaf{O_E}(E)) \xrightarrow{\delta} H^1(\Omega_V) \xrightarrow{\beta_1} H^1(\Omega_V(E)) \xrightarrow{\gamma} H^1(\Omega_V \otimes \sheaf{O_E}(E)) \to 0.
\end{equation*}

\noindent \textit{Claim}: The connecting homomorphism $\delta : H^0(\Omega_V \otimes \sheaf{O_E}(E)) \to H^1(\Omega_V)$ is the first Chern class map of $\sheaf{O_V}(E)$: Since $H^0(\Omega_E \otimes \sheaf{O_E}(E)) = 0$, we have an isomorphism
\begin{equation*}
H^0(\sheaf{O_E}(-E) \otimes \sheaf{O_E}(E)) \xrightarrow{\cong} H^0(\Omega_V \otimes \sheaf{O_E}(E)).
\end{equation*}
Here the above map is given by $z_{\alpha} \otimes \frac{1}{z_{\alpha}} \mapsto \left\{ \frac{d z_{\alpha}}{z_{\alpha}} \right\}$, where $z_{\alpha}$ is a local equation of $E$. Therefore the connecting homomorphism $\delta: H^0(\Omega_V \otimes \sheaf{O_E}(E)) \to H^1(\Omega_V)$ is given by
\begin{equation*}
\left\{ \frac{d z_{\alpha}}{z_{\alpha}} \right\} \mapsto \left\{ \frac{d z_{\alpha}}{z_{\alpha}} - \frac{d z_{\beta}}{z_{\beta}} \right\} = \left\{ d \left( \dfrac{z_{\alpha}}{z_{\beta}} \right)/\left( \dfrac{z_{\alpha}}{z_{\beta}} \right) \right\},
\end{equation*}
which is the first Chern class map of $\sheaf{O_V}(E)$. This proves the claim.

Since $E$ is the exceptional divisor, it is independent of the other divisors in $H^1(\Omega_V)$. Therefore $\image{c_1} \cap \image{\delta} = 0$; hence the composition map $\beta_1 \circ c_1$ is injective.
\end{proof}

We now concentrate on the following restriction map $d_1'$ of the map $d_1$ in \eqref{equation:c_1-d_1-diagram}:
\begin{equation*}
d_1' : H^0(\sheaf{O_F}(E)) \to H^1(\Omega_V(E)),
\end{equation*}
which is also regarded as a connecting homomorphism induced from the exact sequence
\begin{equation*}
0 \to \Omega_V \to \Omega_V(\log{F}) \to \sheaf{O_F} \to 0
\end{equation*}
tensored by $\sheaf{O_V}(E)$.

\begin{lemma}
We have $H^0(\Omega_V(\log{F})(E)) = 0$.
\end{lemma}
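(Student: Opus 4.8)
The plan is to analyze the long exact cohomology sequence
\begin{equation*}
0 \to H^0(\Omega_V(E)) \to H^0(\Omega_V(\log F)(E)) \to H^0(\sheaf{O_F}(E)) \xrightarrow{d_1'} H^1(\Omega_V(E))
\end{equation*}
coming from the sequence $0 \to \Omega_V \to \Omega_V(\log F) \to \sheaf{O_F} \to 0$ tensored by $\sheaf{O_V}(E)$. Since $H^0(\Omega_V(E)) = 0$ (already noted in the excerpt), the vanishing of $H^0(\Omega_V(\log F)(E))$ is equivalent to injectivity of the connecting map $d_1' : H^0(\sheaf{O_F}(E)) \to H^1(\Omega_V(E))$. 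So the first step is to identify $H^0(\sheaf{O_F}(E))$: here $F$ is the proper transform of a nodal cubic fiber, hence an irreducible rational curve with one node, i.e. arithmetic genus $1$, and $F \cdot E = 2$ (the blow-up $\tau$ separates the two branches at the node), so $\sheaf{O_F}(E)$ is a degree-$2$ line bundle on an arithmetic-genus-$1$ curve; I expect $h^0(\sheaf{O_F}(E)) = 2$.

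The second step is to understand $d_1'$ concretely. Just as in Lemma~\ref{lemma:composition}, where $\delta$ was identified with a first Chern class map via local equations, the connecting homomorphism $d_1'$ sends a section of $\sheaf{O_F}(E)$, represented in local charts by log-differentials $df_\alpha/f_\alpha$ (with $f_\alpha$ a local equation of $F$), to the corresponding \v{C}ech class in $H^1(\Omega_V(E))$. The key point is that this class carries the "residue along $F$" data, so $d_1'$ is essentially a refined first-Chern-class-type map twisted by $\sheaf{O_V}(E)$. I would factor the argument through the diagram \eqref{equation:c_1-d_1-diagram}: the restriction of $c_1$ to $H^0(\sheaf{O_F})$ is injective because $F^2 \neq 0$ in the (invertible) intersection matrix of $D_i, S_j, F$, and $\beta_1 \circ c_1$ is injective on that summand by Lemma~\ref{lemma:composition}. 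One then needs to promote injectivity of $\beta_1 \circ c_1|_{H^0(\sheaf{O_F})}$ — which only sees the one-dimensional "constant" part — to injectivity of $d_1'$ on the full two-dimensional $H^0(\sheaf{O_F}(E))$.

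For the extra dimension I would argue as follows: the cokernel of $H^0(\sheaf{O_F}) \to H^0(\sheaf{O_F}(E))$ (equivalently, the part of $H^0(\sheaf{O_F}(E))$ not coming from $H^0(\sheaf{O_F})$) injects into $H^1(\Omega_V)$ only after further pairing against $E$; using the snake lemma on the two rows of \eqref{equation:c_1-d_1-diagram} together with the columns $0 \to H^0(\sheaf{O_F}) \to H^0(\sheaf{O_F}(E)) \to H^0(\sheaf{O_E \cap F}(E)) \to \cdots$ one sees that a nonzero element of $\ker d_1'$ would have to map to a nonzero element of $H^0(\Omega_V \otimes \sheaf{O_E}(E))$ whose image under $\delta$ lies in $\image c_1$. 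But $\delta$ was shown in Lemma~\ref{lemma:composition} to be the Chern class of $\sheaf{O_V}(E)$, and $E$ is independent of $D_i, S_j, F$ in $H^1(\Omega_V)$ (it is an exceptional curve of $\tau$, while $F$ is a proper transform), so $\image \delta \cap \image c_1 = 0$; this forces the element to be zero, contradiction. Hence $d_1'$ is injective and $H^0(\Omega_V(\log F)(E)) = 0$.

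\textbf{Main obstacle.} The genuinely delicate point is step three: making the snake-lemma chase rigorous, i.e. precisely tracking how the two-dimensional $H^0(\sheaf{O_F}(E))$ splits relative to the one-dimensional $H^0(\sheaf{O_F})$ and checking that the "new" direction cannot be killed by $d_1'$. This requires knowing $F\cdot E = 2$ exactly, identifying the restriction $\sheaf{O_F}(E)|_{F\cap E}$, and verifying that the independence of $E$ from the other curves in $H^1(\Omega_V)$ (used in Lemma~\ref{lemma:composition}) still does the job after the $\sheaf{O_V}(E)$-twist. Everything else — the dimension count for $h^0(\sheaf{O_F}(E))$, the vanishing $H^0(\Omega_V(E)) = 0$, and the invertibility of the intersection matrix — is either routine or already supplied.
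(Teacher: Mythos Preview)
Your approach has a concrete error and a structural problem.

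\textbf{The error.} You say $F$ is ``an irreducible rational curve with one node, i.e.\ arithmetic genus $1$''. It is not: $\tau$ blows up precisely the node of the nodal fiber, so the proper transform $F$ is the normalization, a \emph{smooth} $\mP^1$. Since the node has multiplicity $2$, one has $\tau^{\ast}C_0 = F + 2E$, hence $F\cdot E = 2$ and $F^2=-4$. Thus $\sheaf{O_F}(E)\cong\sheaf{O}_{\mP^1}(2)$ and $h^0(\sheaf{O_F}(E))=3$, not $2$. The paper uses this value explicitly later.

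\textbf{The structural problem.} With the corrected dimension, the ``extra'' part $H^0(\sheaf{O_F}(E))/H^0(\sheaf{O_F})$ is $2$-dimensional, while $H^0(\Omega_V\otimes\sheaf{O_E}(E))$ is $1$-dimensional. So your snake-lemma claim that ``a nonzero element of $\ker d_1'$ would have to map to a nonzero element of $H^0(\Omega_V\otimes\sheaf{O_E}(E))$'' cannot be made into an injectivity statement for dimension reasons alone; the argument $\image\delta\cap\image c_1=0$ from Lemma~\ref{lemma:composition} controls only the $1$-dimensional constant piece. In fact the paper establishes the injectivity of $d_1'$ only \emph{after} this lemma, via the big diagram \eqref{equation:big-diagram} and Lemmas~\ref{lemma:h^0=1}--\ref{lemma:surjective}, all of which take the vanishing $H^0(\Omega_V(\log F)(E))=0$ as input. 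Your route is therefore circular relative to the paper's logic, and the diagram chase you sketch does not stand on its own.

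\textbf{What the paper does instead.} The actual proof is a two-line inclusion argument that bypasses $d_1'$ entirely: since $F+E=\tau^{\ast}C - E$ for a general fiber $C$, one has
\[
H^0(\Omega_V(\log F)(E)) \subseteq H^0(\Omega_V(F+E)) = H^0(\Omega_V(\tau^{\ast}C - E)) \subseteq H^0(\Omega_V(\tau^{\ast}C)) = H^0(Y,\Omega_Y(C)),
\]
the last equality by the projection formula; and $H^0(Y,\Omega_Y(C))=H^0(\mP^1,\Omega_{\mP^1}(1))=0$ by the elliptic fibration (as in \cite[Lemma~2]{Lee-Park-K^2=2}). No connecting-map analysis is needed here.
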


\begin{proof}
Let $C$ be a general fiber of the elliptic fibration $\phi : Y \to \mathbb{CP}^1$. By the projection formula we have
\begin{equation*}
\begin{split}
H^0(\Omega_V(\log{F})(E)) &\subseteq H^0(\Omega_V(F + E)) = H^0(\Omega_V(\tau^{\ast}{C} - E)) \\ %
 &\subseteq H^0(\Omega_V(\tau^{\ast}{C})) = H^0(Y, \Omega_{Y}(C)).
\end{split}
\end{equation*}
On the other hand it is not difficult to show that
\begin{equation*}
H^0(Y, \Omega_{Y}(C)) = H^0(\mathbb{CP}^1, \Omega_{\mathbb{CP}^1}(1)) = 0
\end{equation*}
by a similar method in Y. Lee and J. Park~\cite[Lemma~2]{Lee-Park-K^2=2}. Therefore the assertion follows.
\end{proof}

By the above lemma, we have the following commutative diagram of exact sequences:
%
\begin{equation}\label{equation:big-diagram}
{\tiny
\xymatrix{%
 &
 &
0 \ar[d]
 &
0 \ar[d]
& \\ %
 &
 &
H^0(\Omega_V \otimes \sheaf{O_E}(E)) \ar[d]^{\delta}
 &
H^0(\Omega_V(\log{F}) \otimes \sheaf{O_E}(E)) \ar[d]
 & \\ %
0 \ar[r]
 &
H^0(\sheaf{O_{F}}) \ar[r]  \ar[d]
 &
H^1(\Omega_V) \ar[r]^{\alpha_1} \ar[d]^{\beta_1}
 &
H^1(\Omega_V(\log{F})) \ar[r] \ar[d]^{\beta_2}
 &
0 \\ %
0 \ar[r]
 &
H^0(\sheaf{O_{F}}(E)) \ar[r]^{d_1'} \ar[dr]_{\gamma \circ d_1'}
 &
H^1(\Omega_V(E)) \ar[r]^{\alpha_2} \ar[d]^{\gamma}
 &
H^1(\Omega_V(\log{F})(E)) \ar[r] \ar[d]
 &
0 \\ %
 &
 &
H^1(\Omega_V \otimes \sheaf{O_E}(E)) \ar[d]
 &
H^1(\Omega_V(\log{F}) \otimes \sheaf{O_E}(E)) \ar[d]
 & \\ %
& & 0 & 0 &
}}
\end{equation}

\begin{lemma}\label{lemma:h^0=1}
$h^0(\Omega_{V}(\log{F}) \otimes \sheaf{O_E}(E)) = 1$.
\end{lemma}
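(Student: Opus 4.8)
The plan is to compute $h^0(\Omega_V(\log F)\otimes\sheaf{O_E}(E))$ directly by restricting everything to the curve $E$, which is a smooth rational curve with $E^2=-1$. First I would analyze the restriction $\Omega_V(\log F)|_E$. Since $E = \tau^{-1}(\text{node})$ is the exceptional divisor of the blow-up $\tau\colon V\to Y$, and $F$ is the proper transform of the nodal fiber, the curve $F$ meets $E$ in exactly two points (the two branches of the node), and $F+E$ (together with the rest of that fiber) is simple normal crossing. The key local computation is to understand, at the two points $E\cap F$, how the $\log F$ modification interacts with the restriction to $E$: away from $E\cap F$ one has $\Omega_V(\log F)|_E = \Omega_V|_E$, and near the two intersection points the $\log F$ pole in the direction transverse to $E$ contributes. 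So I would set up the residue/restriction exact sequence
\begin{equation*}
0 \to \Omega_V|_E \to \Omega_V(\log F)|_E \to \sheaf{O}_{E\cap F} \to 0
\end{equation*}
or rather, to be careful, the sequence on $V$
\begin{equation*}
0 \to \Omega_V \to \Omega_V(\log F) \to \sheaf{O}_F \to 0
\end{equation*}
tensored with $\sheaf{O}_E(E)$, giving
\begin{equation*}
0 \to \mathcal{T}or_1(\sheaf{O}_F,\sheaf{O}_E(E)) \to \Omega_V\otimes\sheaf{O}_E(E) \to \Omega_V(\log F)\otimes\sheaf{O}_E(E) \to \sheaf{O}_F\otimes\sheaf{O}_E(E) \to 0,
\end{equation*}
and then take cohomology.

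Next I would identify the pieces. We have $\sheaf{O}_F\otimes\sheaf{O}_E(E) = \sheaf{O}_{E\cap F}$, a skyscraper of length $2$ (since $F\cdot E = 2$), so its $H^0$ has dimension $2$ and $H^1 = 0$. The $\mathcal{T}or_1$ term is also supported on $E\cap F$. For the middle term $\Omega_V\otimes\sheaf{O}_E(E)$: using the conormal sequence $0\to \sheaf{O}_E(-E)\to \Omega_V\otimes\sheaf{O}_E\to\Omega_E\to 0$ twisted by $\sheaf{O}_E(E)$, and $E\cong\mP^1$ with $E^2=-1$, we get $\sheaf{O}_E(-E)\otimes\sheaf{O}_E(E) = \sheaf{O}_E$ and $\Omega_E\otimes\sheaf{O}_E(E) = \sheaf{O}_{\mP^1}(-2)\otimes\sheaf{O}_{\mP^1}(1) = \sheaf{O}_{\mP^1}(-1)$, so
\begin{equation*}
0 \to \sheaf{O}_{\mP^1} \to \Omega_V\otimes\sheaf{O}_E(E) \to \sheaf{O}_{\mP^1}(-1) \to 0,
\end{equation*}
whence $h^0(\Omega_V\otimes\sheaf{O}_E(E)) = 1$ and $h^1 = 0$. (Note this last group $H^0(\Omega_V\otimes\sheaf{O}_E(E))$ is exactly the source of $\delta$ in diagram~\eqref{equation:big-diagram}, and being $1$-dimensional it is generated by the first Chern class of $\sheaf{O}_V(E)$, consistent with the Claim in the proof of Lemma~\ref{lemma:composition}.) Chasing the four-term sequence, the $\mathcal{T}or_1$ piece maps into $\Omega_V\otimes\sheaf{O}_E(E)$; because that map factors through the torsion-free-on-a-reduced-curve considerations one checks it is injective, so the image of $\Omega_V\otimes\sheaf{O}_E(E)$ in $\Omega_V(\log F)\otimes\sheaf{O}_E(E)$ is a rank-one sheaf with $h^0$ at most $1$, while the cokernel $\sheaf{O}_{E\cap F}$ contributes. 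The claim $h^0 = 1$ should come out by showing the connecting map $H^0(\sheaf{O}_{E\cap F})\to H^1(\text{image of }\Omega_V\otimes\sheaf{O}_E(E))$ has $1$-dimensional image, equivalently rank exactly $1$ on the two-dimensional skyscraper space.

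The main obstacle I anticipate is pinning down that connecting/coboundary map precisely: it is not automatic that it has the exact rank making $h^0$ land on $1$ rather than $0$ or $2$, and getting it right requires a genuinely local analysis at the two points $E\cap F$ of how a $\log$ one-form $dz/z$ along $F$ restricts to $E$. I would do this in explicit local coordinates $(z,w)$ where $E=\{w=0\}$ and $F=\{z=0\}$ near one intersection point (and the symmetric picture at the other), compute $\Omega_V(\log F)\otimes\sheaf{O}_E(E)$ locally as generated by $dz/z$ and $dw$ suitably twisted, and track which global sections survive. An alternative, perhaps cleaner, route is to relate $\Omega_V(\log F)|_E$ back to the geometry on $Y$: since $\tau$ contracts $E$ to the node of the nodal fiber $F_0\subset Y$, and $F_0$ is itself a nodal rational curve, $\Omega_V(\log F)$ restricted to $E$ should reflect $\Omega_Y(\log F_0)$ at the node, and one can compare with the known structure of log differentials of a nodal curve. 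Either way, once the local model is fixed the cohomology count is routine and yields $h^0 = 1$.
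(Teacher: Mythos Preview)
Your computation contains a sign error that propagates through everything: since $E^{2}=-1$, the restriction $\sheaf{O_E}(E)$ is $\sheaf{O}_{\mP^1}(-1)$, not $\sheaf{O}_{\mP^1}(1)$. The twisted conormal sequence therefore reads
\[
0 \to \sheaf{O}_{\mP^1} \to \Omega_V\otimes\sheaf{O_E}(E) \to \sheaf{O}_{\mP^1}(-3) \to 0,
\]
so while $h^0(\Omega_V\otimes\sheaf{O_E}(E))=1$ is right, you get $h^1(\Omega_V\otimes\sheaf{O_E}(E))=2$, not $0$ (and indeed the paper uses this value in the proof of Lemma~\ref{lemma:surjective}). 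Moreover, because $F$ and $E$ meet transversally one has $\mathcal{T}or_1^{\sheaf{O_V}}(\sheaf{O_F},\sheaf{O_E})=0$, so your four-term sequence collapses to the short exact sequence
\[
0 \to \Omega_V\otimes\sheaf{O_E}(E) \to \Omega_V(\log F)\otimes\sheaf{O_E}(E) \to \sheaf{O}_{E\cap F} \to 0.
\]
With the corrected numbers, the long exact sequence shows $h^0(\Omega_V(\log F)\otimes\sheaf{O_E}(E))=1$ precisely when the connecting map $H^0(\sheaf{O}_{E\cap F})\cong\mC^{2}\to H^1(\Omega_V\otimes\sheaf{O_E}(E))\cong\mC^{2}$ is an \emph{isomorphism}, not of rank~$1$ as you say.

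The paper avoids this connecting-map analysis by filtering $\Omega_V(\log F)|_E$ in the other direction: rather than the residue sequence along $F$, it restricts the natural map $\Omega_V|_E\to\Omega_E$ to $\Omega_V(\log F)|_E$, obtaining $\overline{r}\colon\Omega_V(\log F)\otimes\sheaf{O_E}\to\Omega_E(F)$, and checks in local coordinates that $\ker\overline{r}$ is still the conormal bundle $\sheaf{O_E}(-E)$. Twisting by $\sheaf{O_E}(E)$ then gives
\[
0 \to \sheaf{O_E} \to \Omega_V(\log F)\otimes\sheaf{O_E}(E) \to \image(\overline{r}\otimes\id) \to 0,
\]
and since $\image(\overline{r}\otimes\id)\subset\Omega_E(F+E)=\sheaf{O}_{\mP^1}(-1)$ has no global sections, one reads off $h^0=1$ immediately. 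Your route can in fact be completed---the connecting map factors through the tangential summand and is the boundary map of $0\to\sheaf{O}_{\mP^1}(-3)\to\sheaf{O}_{\mP^1}(-1)\to\sheaf{O}_{p_1}\oplus\sheaf{O}_{p_2}\to 0$, hence an isomorphism---but making that precise amounts to exactly the local identification that the paper's filtration packages for free.
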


\begin{proof}
Let
\begin{equation*}
\overline{r} : \Omega_V(\log{F}) \otimes \sheaf{O_E} \to \Omega_E \otimes \sheaf{O_E}(F)
\end{equation*}
be the restriction to the subsheaf $\Omega_V(\log{F})$ of the restriction map $r \otimes \id : \Omega_V|_E \otimes \sheaf{O_E}(F) \to \Omega_{E} \otimes \sheaf{O_E}(F)$.

\textit{Claim.} $\ker{\overline{r}} = \sheaf{O_E}(-E)$: Let $z_1$, $z_2$ be the local equations of $F$, $E$, respectively. Note that the restriction map $r : \Omega_V|_E \to \Omega_{E}$ is defined by
\begin{equation*}
r : \overline{a} \, dz_1 + \overline{b} \, dz_2 = \overline{a} \, d\overline{z_1}.
\end{equation*}
Hence the map $\overline{r}$ is defined by
\begin{equation*}
\overline{r} \left( \overline{a} \frac{d z_1}{z_1} + \overline{b} \, dz_2 \right) = \overline{a} \frac{d z_1}{z_1}.
\end{equation*}
It follows that $\ker{\overline{r}} = \ker{r} = \sheaf{O_E}(-E)$.

By the claim we have
\begin{equation*}
\ker[\overline{r} \otimes \id : \Omega_V(\log{F}) \otimes \sheaf{O_E}(E) \to \Omega_E \otimes \sheaf{O_E}(F + E)] = \sheaf{O_E}(-E) \otimes \sheaf{O_E}(E) = \sheaf{O_E}.
\end{equation*}
Hence we have the exact sequence
\begin{equation*}
0 \to H^0(\sheaf{O_E}) \to H^0(\Omega_V(\log{F}) \otimes \sheaf{O_E}(E)) \to H^0(\image(\overline{r} \otimes \id)).
\end{equation*}
Since $H^0(\image(\overline{r} \otimes \id)) \subset H^0(\Omega_E \otimes \sheaf{O_E}(F + E)) = 0$, the result follows.
\end{proof}

\begin{lemma}\label{lemma:surjective}
The composition map
\begin{equation*}
\gamma \circ d_1' : H^0(\sheaf{O_{F}}(E)) \to H^1(\Omega_{V} \otimes \sheaf{O_E}(E))
\end{equation*}
is surjective.
\end{lemma}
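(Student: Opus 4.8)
The plan is to analyze the map $\gamma\circ d_1'$ through the commutative diagram \eqref{equation:big-diagram}, using the structural information already established. Note that $H^0(\sheaf{O_F}(E))$ has dimension equal to $1 + E\cdot F$; since $E$ meets the proper transform $F$ of the nodal fiber in the expected way (the node is blown up, so $E\cdot F = 1$), we get $h^0(\sheaf{O_F}(E)) = 2$. On the target side, the exact sequence $0\to \Omega_V\otimes\sheaf{O_E}(E)\to \Omega_V(\log F)\otimes\sheaf{O_E}(E)\to \sheaf{O_F}\otimes(\cdots)$ — more precisely the cokernel computation in Lemma~\ref{lemma:h^0=1} — gives control of $h^1(\Omega_V\otimes\sheaf{O_E}(E))$. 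I would first pin down this dimension: from the vertical sequence in Lemma~\ref{lemma:composition}, $H^1(\Omega_V\otimes\sheaf{O_E}(E))$ is the cokernel of $\beta_1$, and since $E\cong\mP^1$ with $E\cdot E=-1$, one computes $\Omega_V\otimes\sheaf{O_E}(E)$ directly: $\Omega_V|_E$ sits in $0\to \sheaf{O_E}(-E)\to\Omega_V|_E\to\Omega_E\to 0$, i.e. $0\to\sheaf{O}(1)\to\Omega_V|_E\to\sheaf{O}(-2)\to 0$ after twisting by $\sheaf{O}_E(E)=\sheaf{O}(-1)$ we get $0\to\sheaf{O}\to\Omega_V\otimes\sheaf{O}_E(E)\to\sheaf{O}(-3)\to 0$, so $h^0=1$, $h^1=2$.

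So the claim is that a map $\mC^2\to\mC^2$ is surjective, equivalently injective. The strategy I would use: chase the diagram. The composition $\gamma\circ d_1'$ fits into the lower-left triangle of \eqref{equation:big-diagram}. Its kernel is $d_1'^{-1}(\image \beta_1 \cap \ker\alpha_2)$... more usefully, $\ker(\gamma\circ d_1') = (d_1')^{-1}(\ker\gamma) = (d_1')^{-1}(\image\beta_1)$ since the middle column is exact at $H^1(\Omega_V(E))$ with $\ker\gamma=\image\beta_1$. Now I would use that $d_1'$ is injective (it is the connecting map of $0\to\Omega_V\to\Omega_V(\log F)\to\sheaf{O}_F\to 0$ twisted by $\sheaf{O}_V(E)$, and injectivity follows because $H^0(\Omega_V(\log F)(E))=0$ by the preceding lemma). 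Therefore $\ker(\gamma\circ d_1')\cong \image d_1'\cap\image\beta_1$ inside $H^1(\Omega_V(E))$. The goal reduces to showing this intersection is zero — equivalently, that no nonzero class in $H^0(\sheaf{O}_F(E))$ maps under $d_1'$ into the image of $\beta_1$, i.e. into classes pulled back from $H^1(\Omega_V)$ (the ``$E$-independent'' part, in the language of Lemma~\ref{lemma:composition}).

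To establish that, I would compare with the untwisted situation. The class in $H^0(\sheaf{O}_F)\subset H^0(\sheaf{O}_F(E))$ maps under $d_1'$ to $\beta_1(c_1(\text{class of }F))$, i.e. it does land in $\image\beta_1$ — that is exactly the left vertical map $H^0(\sheaf{O}_F)\to H^0(\sheaf{O}_F(E))$ followed by $d_1'$ agreeing with $c_1$ followed by $\beta_1$, visible from the left square of \eqref{equation:big-diagram}. So the one-dimensional subspace $H^0(\sheaf{O}_F)$ is ``used up'' landing in $\image\beta_1\cap\image d_1'$, but $\gamma$ kills it. The point is then that the quotient $H^0(\sheaf{O}_F(E))/H^0(\sheaf{O}_F)$, which is one-dimensional, must map isomorphically onto $H^1(\Omega_V\otimes\sheaf{O}_E(E))/(\text{image of }H^0(\sheaf{O}_F)) $ — but more cleanly: I would show $\gamma\circ d_1'$ restricted to a complement of $H^0(\sheaf{O}_F)$ is nonzero, which combined with $\dim = 1$ on source-mod-kernel and target being... wait, target has dimension $2$. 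Here the cleaner route: use the right-hand column. From \eqref{equation:big-diagram}, $h^1(\Omega_V(\log F)\otimes\sheaf{O}_E(E))$: from the sequence $0\to\sheaf{O}_E\to\Omega_V(\log F)\otimes\sheaf{O}_E(E)\to\image(\overline r\otimes\id)\to 0$ in Lemma~\ref{lemma:h^0=1}, and $\image(\overline r\otimes\id)\subset\Omega_E\otimes\sheaf{O}_E(F+E)$ with $h^0=h^1=0$ (degree $-1$ on $\mP^1$ for $F\cdot E + E\cdot E = 1-1=0$... this needs $\Omega_E=\sheaf{O}(-2)$, twist by degree $1$ gives $\sheaf{O}(-1)$), so $h^1(\Omega_V(\log F)\otimes\sheaf{O}_E(E)) = h^1(\sheaf{O}_E) + h^1(\image) = 0$. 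Then the bottom-right vertical map forces the right-hand map out of $H^1(\Omega_V(\log F)(E))$ into $H^1(\Omega_V(\log F)\otimes\sheaf{O}_E(E))=0$ to be zero, which by the middle row's exactness and a diagram chase pushes all of $H^1(\Omega_V\otimes\sheaf{O}_E(E))$ to come from $\gamma\circ\alpha_2^{-1}(\ldots)$ — and I would conclude surjectivity of $\gamma\circ d_1'$ by showing $\ker\alpha_2 = \image d_1'$ surjects onto $\ker(\text{bottom-right vertical})=H^1(\Omega_V\otimes\sheaf{O}_E(E))$ via $\gamma$.

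The main obstacle I anticipate is the bookkeeping of dimensions and the precise identification of images inside $H^1(\Omega_V(E))$: one must verify $h^1(\Omega_V\otimes\sheaf{O}_E(E))=2$ and $h^0(\sheaf{O}_F(E))=2$ and then show the composite is an \emph{iso} rather than merely having the right numerics — this requires knowing that the ``new'' section of $\sheaf{O}_F(E)$ (beyond constants) genuinely produces an $E$-component in cohomology, i.e. does not vanish under $\gamma$. I would extract this from the explicit Čech description: a section of $\sheaf{O}_F(E)$ not pulled back from $\sheaf{O}_F$ has a pole along $E\cap F$, and chasing it through the connecting map $d_1'$ and then restricting to $E$ via $\gamma$ yields precisely the generator of the $\sheaf{O}(-3)$-quotient part of $\Omega_V\otimes\sheaf{O}_E(E)$ computed above; showing this non-vanishing is the technical heart. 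An alternative, perhaps cleaner, is to run the whole argument on the $\sheaf{O}_E$-level: since $\gamma\circ d_1'$ factors as $H^0(\sheaf{O}_F(E))\to H^0(\sheaf{O}_{F}\otimes\sheaf{O}_E(E))\to H^1(\Omega_V\otimes\sheaf{O}_E(E))$ where the first map is restriction to the point(s) $F\cap E$ and the second is the connecting map for $0\to\Omega_V\otimes\sheaf{O}_E(E)\to\Omega_V(\log F)\otimes\sheaf{O}_E(E)\to\sheaf{O}_{F\cap E}\to 0$; then surjectivity follows from $h^0(\Omega_V(\log F)\otimes\sheaf{O}_E(E))=1$ and $h^1$ of the same being $0$ (established above) together with an Euler-characteristic count on $E\cong\mP^1$. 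I would pursue this last approach, as it isolates everything on $\mP^1$ where all cohomology is explicit.
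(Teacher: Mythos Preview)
Your computation of $E\cdot F$ is wrong, and the error propagates through the entire argument. The curve $F$ is the proper transform of a \emph{nodal} fiber after blowing up the node; the two local branches at the node each meet the exceptional curve $E$ transversally, so $E\cdot F=2$ (equivalently, $\tau^{\ast}(\text{nodal fiber})=F+2E$ and $\tau^{\ast}(\text{nodal fiber})\cdot E=0$ force $F\cdot E=2$). Hence $h^0(\sheaf{O_F}(E))=3$, not $2$, and the map in question is $\mC^3\to\mC^2$ rather than an endomorphism of $\mC^2$. Your claim that $\Omega_E\otimes\sheaf{O_E}(F+E)\cong\sheaf{O}(-1)$ is an arithmetic slip that happens to hit the correct answer: with your own value $F\cdot E=1$ it would be $\sheaf{O}(-2)$, whence $h^1(\Omega_V(\log F)\otimes\sheaf{O_E}(E))=1$, not $0$, and then your ``cleaner'' factorization through $H^0(\sheaf{O}_{F\cap E})\cong\mC$ cannot surject onto a $2$-dimensional target.

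With the correct value $F\cdot E=2$ your third approach can in fact be made to work and is slightly more direct than the paper's: $H^0(\sheaf{O_F}(E))\to H^0(\sheaf{O}_{F\cap E})\cong\mC^2$ is surjective with kernel $H^0(\sheaf{O_F})$, and the connecting map $H^0(\sheaf{O}_{F\cap E})\to H^1(\Omega_V\otimes\sheaf{O_E}(E))$ is an isomorphism because $h^0(\Omega_V(\log F)\otimes\sheaf{O_E}(E))=h^0(\Omega_V\otimes\sheaf{O_E}(E))=1$ and the quotient $\sheaf{O_E}(-1)$ of $\Omega_V(\log F)\otimes\sheaf{O_E}(E)$ by $\sheaf{O_E}$ has no cohomology. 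The paper takes a different route: it runs a dimension count across all rows and columns of the big diagram \eqref{equation:big-diagram} to obtain $h^1(\Omega_V(\log F)\otimes\sheaf{O_E}(E))=0$ indirectly, deduces that $\beta_2$ is surjective, and then shows that $\image\beta_1\cap\image d_1'$ is exactly the line $H^0(\sheaf{O_F})$; since $\ker\gamma=\image\beta_1$, the kernel of $\gamma\circ d_1'$ is one-dimensional and the image has dimension $3-1=2$.
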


\begin{proof}
On the second column in \eqref{equation:big-diagram}, we have $h^0(\Omega_V \otimes \sheaf{O_E}(E)) = 1$, $h^1(\Omega_V) = 21$, $h^1(\Omega_V \otimes \sheaf{O_E}(E)) = 2$. Hence $h^1(\Omega_V(E)) = 22$. Since $h^0(\sheaf{O_{F}}) = 1$ and $h^0(\sheaf{O_{F}}(E)) = 3$, it follows from the first and the second columns that $h^1(\Omega_V(\log{F})) = 20$ and $h^1(\Omega_V(\log{F})(E)) = 19$. Therefore, on the third column, we have $h^1(\Omega_V(\log{F}) \otimes \sheaf{O_E}(E)) = 0$ because $h^0(\Omega_V(\log{F}) \otimes \sheaf{O_E}(E)) = 1$ by Lemma~\ref{lemma:h^0=1}. Hence $\beta_2$ is surjective.

Note that $H^0(\sheaf{O_{F}}) \subseteq \beta_1(H^1(\Omega_V)) \cap \ker{\alpha_2}$ in $H^1(\Omega_V(E))$. On the other hand, since $\alpha_1$ and $\beta_2$ are surjective, we have
\begin{equation*}
(\alpha_2 \circ \beta_1)(H^1(\Omega_V)) = (\beta_2 \circ \alpha_1)(H^1(\Omega_V)) = H^1(\Omega_V(\log{F})(E)).
\end{equation*}
Therefore $\dim[\beta_1(H^1(\Omega_V)) \cap \ker{\alpha_2}] = 1$; hence $H^0(\sheaf{O_{F}}) = \beta_1(H^1(\Omega_V)) \cap \ker{\alpha_2}$ in $H^1(\Omega_V(E))$. Thus
\begin{equation*}
\dim[\beta_1(H^1(\Omega_V)) \cap H^0(\sheaf{O_{F}}(E))] = 1.
\end{equation*}
Since $\ker{\gamma} = \image{\beta_1}$, we have $\dim[\ker{\gamma} \cap H^0(\sheaf{O_{F}}(E))] = 1$; hence the composition $\gamma \circ d_1'$ is surjective because $h^0(\sheaf{O_{F}}(E))=3$ but $h^1(\Omega_V \otimes \sheaf{O_E}(E)) = 2$.
\end{proof}

\begin{proof}[Proof of Proposition~\ref{proposition:H^2=0}]
Set $\mathbb{K} = H^0(\Omega_V(\log(D + S + F))(E))$. We want to show that $\mathbb{K} = 0$. Consider the commutative diagram
\begin{tiny}
\begin{equation*}
\xymatrix{%
& & & 0 \ar[d] \\ %
& & & H^0(\Omega_V \otimes \sheaf{O_E}(E)) \ar[d]^{\delta} \\ %
& 0 \ar[r] & \displaystyle  \bigoplus_{i=1}^{6} H^0(\sheaf{O_{D_i}}) \oplus \bigoplus_{i=1}^{3} H^0(\sheaf{O_{S_i}}) \oplus H^0(\sheaf{O_F}(E)) \ar[r]^(.7){c_1} \ar[d]& H^1(\Omega_V) \ar[d]^{\beta_1} \\ %
0 \ar[r] & \mathbb{K} \ar[r] & \displaystyle  \bigoplus_{i=1}^{6} H^0(\sheaf{O_{D_i}}) \oplus \bigoplus_{i=1}^{3} H^0(\sheaf{O_{S_i}}) \oplus H^0(\sheaf{O_F}(E)) \ar[r]^(.7){d_1} & H^1(\Omega_V(E)) \ar[d]^{\gamma} \\ %
& & & H^1(\Omega_V \otimes \sheaf{O_E}(E))
}
\end{equation*}
\end{tiny}

Let $\{1, v_1, v_2\}$ be a basis for $H^0(\sheaf{O_{F}}(E))$, where $1$ is a basis for $H^0(\sheaf{O_{F}}) \cong \mathbb{C}$. By Lemma~\ref{lemma:surjective} the images $\gamma \circ d_1(v_1)$ and $\gamma \circ d_1(v_2)$ span $H^1(\Omega_V \otimes \sheaf{O_E}(E))$. Therefore they are linearly independent in $H^1(\Omega_V(E))$ and they are not contained in the kernel of the map $\gamma$. On the other hand
\begin{equation*}
\bigoplus_{i=1}^{6} H^0(\sheaf{O_{D_i}}) \oplus \bigoplus_{i=1}^{3} H^0(\sheaf{O_{S_i}}) \oplus H^0(\sheaf{O_F}) \subset \ker{\gamma}.
\end{equation*}
Therefore the map $d_1$ is injective; hence $\mathbb{K} = \ker{d_1} = 0$.
\end{proof}

\section{Simply connected surfaces with $p_g=1$, $q=0$, and $1 \le K^2 \le 6$}
\label{section:Examples}

In this section we construct various examples of simply connected minimal surfaces of general type with $p_g=1$, $q=0$ and $1 \le K^2 \le 6$. Since all the proofs are basically the same as the case of the main construction, we describe only complex surfaces $Z$ which make it possible to get singular surfaces $X$ with permissible singularities.

\subsection{An example with $K^2=1$}

Let $Y$ be the K3 surface described in Section~\ref{section:construction}. We blow up the surface $Y$ totally three times at the three marked points $\bullet$; Figure~\ref{figure:K1}(A). We then get a surface $Z$; Figure~\ref{figure:K1}(B). There exist two disjoint linear chains of ${\mathbb{CP}}^1$'s in $Z$:
\begin{equation*}
\uc{-3}-\uc{-2}-\uc{-2}-\uc{-3}, \quad \uc{-2}-\uc{-5}-\uc{-3}.
\end{equation*}

\begin{figure}[hbtb]
\centering \subfloat[$Y$]{\includegraphics[scale=0.6]{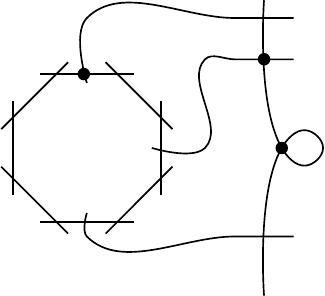}} \qquad
\subfloat[$Z = \blup{Y}{3}$]{\includegraphics[scale=0.67]{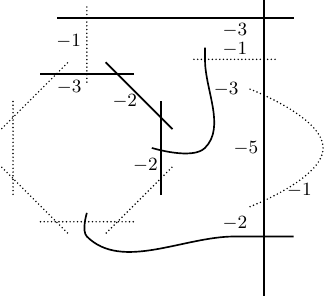}}
\caption{An example with $K^2=1$}
\label{figure:K1}
\end{figure}

\begin{remark}
According to Catanese~\cite{Catanese}, all minimal surfaces of general type with $p_g=1$ and $K^2=1$ are diffeomorphic and simply connected. Hence the example above is automatically simply connected. However we can prove the simply connectedness directly by using similar techniques -- rational blow-down surgery, Milnor fiber theory, and Van-Kampen Theorem -- in Y. Lee and J. Park~\cite{Lee-Park-K^2=2} and the authors~\cite{PPS-K3, PPS-K4}.
\end{remark}

\subsection{An example with $K^2=3$}

Let $A$, $L_i$ ($i=1,2,3$) be lines on the projective plane $\mathbb{CP}^2$ and $B$ a nonsingular conic on $\mathbb{CP}^2$ which intersect as in Figure~\ref{figure:K3-E}(A). Consider a pencil of cubics generated by the two cubics $A+B$ and $L_1+L_2+L_3$. Blow up the five base points $\bullet$ of the pencil of cubics including infinitely near base-points at each point. Then we obtain a rational elliptic surface $E(1)$ with an $I_6$-singular fiber, $I_3$-singular fiber, three nodal singular fibers, and five sections which intersect as in Figure~\ref{figure:K3-E}(B), where we omit one nodal singular fiber and one section which are not used in the following construction.

\begin{figure}[hbtb]
\centering \subfloat[A pencil]{\includegraphics[scale=1]{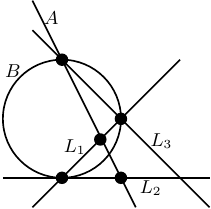}} \qquad
\subfloat[$E(1)$]{\includegraphics[scale=0.67]{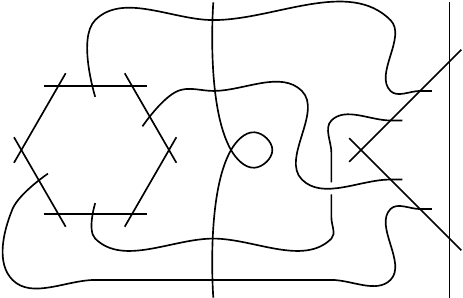}}
\caption{A rational elliptic surface $E(1)$ for $K^2=3$}
\label{figure:K3-E}
\end{figure}

Let $Y$ be a double cover of the rational elliptic surface $E(1)$ branched along two general fibers. Then $Y$ is an elliptic K3 surface with two $I_6$-singular fibers, two $I_3$-singular fibers, six nodal singular fibers, and five sections. We use only two $I_6$-singular fibers, one $I_3$-singular fibers, one nodal singular fibers, and four sections; Figure~\ref{figure:K3-sc}(A).

We blow up the surface $Y$ six times at the six marked points $\bullet$ and twice at the marked point $\bigodot$. We then get a surface $Z$; Figure~\ref{figure:K3-sc}(B). There exist four disjoint linear chains of ${\mathbb{CP}}^1$'s in $Z$:
\begin{equation*}
\uc{-5}-\uc{-2}-\uc{-6}-\uc{-2}-\uc{-2}-\uc{-2}, \quad \uc{-2}-\uc{-3}-\uc{-4}, \quad \uc{-2}-\uc{-3}-\uc{-4}, \quad \uc{-3}-\uc{-3}.
\end{equation*}

\begin{figure}[hbtb]
\centering \subfloat[$Y$]{\includegraphics[scale=0.6]{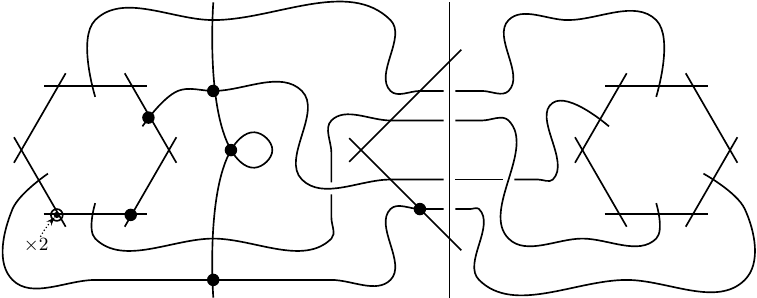}} \\
\subfloat[$Z = \blup{Y}{8}$]{\includegraphics[scale=0.67]{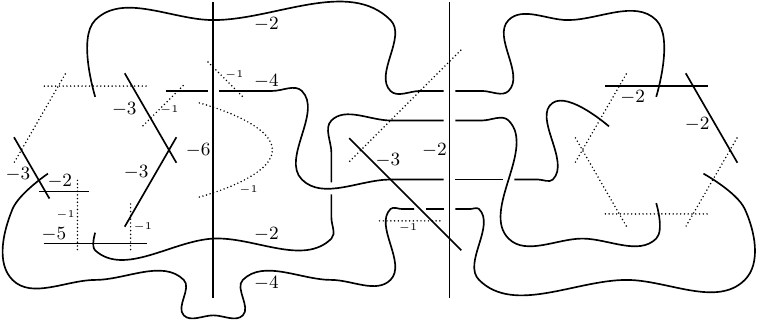}}
\caption{An example with $K^2=3$}
\label{figure:K3-sc}
\end{figure}

\subsection{An example with $K^2=4$}

Let $A$ and $L$ be lines on the projective plane $\mathbb{CP}^2$ and $B$ a nonsingular conic on $\mathbb{CP}^2$ which intersect as in Figure~\ref{figure:K4-sc-E}(A). Consider a pencil of cubics generated by the two cubics $A+B$ and $3L$. Blow up the three base points of the pencil of cubics including infinitely near base-points at each point. Then we obtain a rational elliptic surface $E(1)$ with an $\tilde{E}_6$-singular fiber, an $I_2$-singular fiber, two nodal singular fibers, and three sections; Figure~\ref{figure:K4-sc-E}(B).

\begin{figure}[hbtb]
\centering
\subfloat[A pencil]{\includegraphics[scale=1]{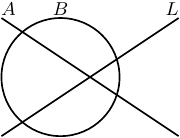}} \qquad
\subfloat[$E(1)$]{\includegraphics[scale=0.67]{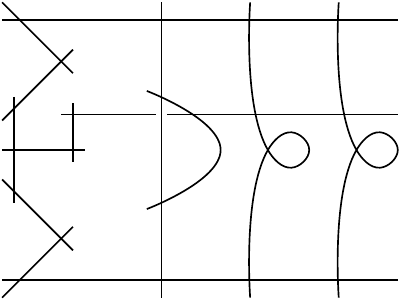}}
\caption{A rational elliptic surface $E(1)$ for $K^2=4$}
\label{figure:K4-sc-E}
\end{figure}

Let $Y$ be a double cover of the rational elliptic surface $E(1)$ branched along two general fibers. Then $Y$ is an elliptic K3 surface with two $\tilde{E}_6$-singular fibers, two $I_2$-singular fibers, four nodal singular fibers, and three sections. We use only two $\tilde{E}_6$-singular fibers, two $I_2$-singular fibers, one nodal singular fibers, and three sections; Figure~\ref{figure:K4-sc}(A).

We blow up the surface $Y$ totally $16$ times at the marked points. We then get a surface $Z$; Figure~\ref{figure:K4-sc}(B). There exist six disjoint linear chains of ${\mathbb{CP}}^1$'s in $Z$:
\begin{gather*}
\uc{-2}-\uc{-10}-\uc{-2}-\uc{-2}-\uc{-2}-\uc{-2}-\uc{-2}-\uc{-3}, \quad \uc{-2}-\uc{-8}-\uc{-2}-\uc{-2}-\uc{-2}-\uc{-3} \\
\uc{-3}-\uc{-2}-\uc{-2}-\uc{-3}, \quad \uc{-2}-\uc{-5}-\uc{-3}, \quad \uc{-4}, \quad \uc{-4}.
\end{gather*}

\begin{figure}[hbtb]
\centering \subfloat[$Y$]{\includegraphics[scale=0.6]{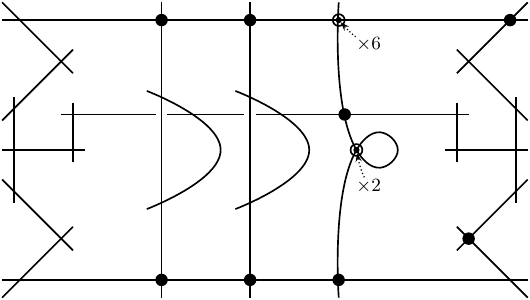}} \\
\subfloat[$Z = \blup{Y}{16}$]{\includegraphics[scale=0.67]{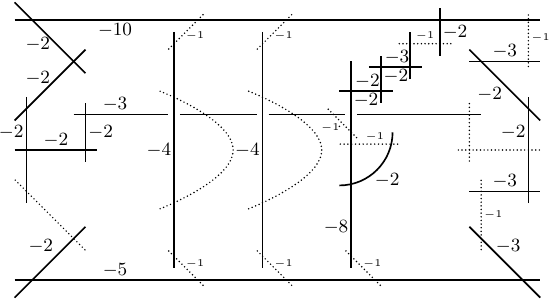}}
\caption{An example with $K^2=4$}
\label{figure:K4-sc}
\end{figure}

\subsection{An example with $K^2=5$}

Let $Y$ be the K3 surface used in Section~\ref{sec:h_1=z_2}. We use only two $I_7$-singular fibers, two $I_2$-singular fibers, one nodal singular fibers, and three sections; Figure~\ref{figure:K5-sc}(A).

\begin{figure}[hbtb]
\centering \subfloat[$Y$]{\includegraphics[scale=0.6]{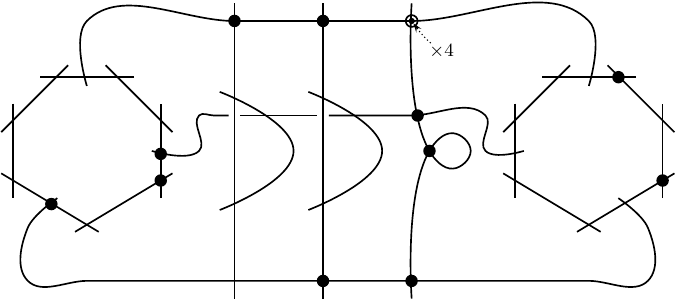}} \\
\subfloat[$Z = \blup{Y}{15}$]{\includegraphics[scale=0.66]{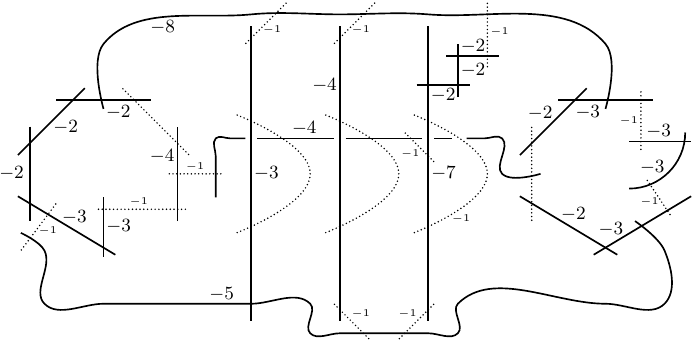}}
\caption{An example with $K^2=5$}
\label{figure:K5-sc}
\end{figure}

We blow up the surface $Y$ totally $15$ times at the marked points. We then get a surface $Z$; Figure~\ref{figure:K5-sc}(B). There exist seven disjoint linear chains of ${\mathbb{CP}}^1$'s in $Z$:
\begin{gather*}
\uc{-2}-\uc{-3}-\uc{-8}-\uc{-2}-\uc{-2}-\uc{-2}-\uc{-3}-\uc{-3}, \quad \uc{-7}-\uc{-2}-\uc{-2}-\uc{-2} \\
\uc{-3}-\uc{-5}-\uc{-3}-\uc{-2}, \quad \uc{-3}-\uc{-3}, \quad \uc{-4}, \quad \uc{-4}, \quad \uc{-4}.
\end{gather*}

\subsection{An example with $K^2=6$}

Let $Y$ be the elliptic K3 surface described in Section~\ref{section:construction}. We use only two $I_8$-singular fibers, two $I_2$-singular fibers, one nodal singular fibers, and three sections; Figure~\ref{figure:K6-sc}(A).

We blow up the surface $Y$ totally $18$ times at the marked points. We then get a surface $Z$; Figure~\ref{figure:K6-sc}(B). There exist five disjoint linear chains of ${\mathbb{CP}}^1$'s in $Z$:
\begin{gather*}
\uc{-2}-\uc{-2}-\uc{-3}-\uc{-9}-\uc{-2}-\uc{-2}-\uc{-2}-\uc{-2}-\uc{-3}-\uc{-4}, \\
\uc{-2}-\uc{-3}-\uc{-7}-\uc{-2}-\uc{-2}-\uc{-3}-\uc{-3}, \quad \uc{-7}-\uc{-2}-\uc{-2}-\uc{-2}, \\
\uc{-4}-\uc{-3}-\uc{-2}, \quad \uc{-4}.
\end{gather*}

\begin{figure}[hbtb]
\centering \subfloat[$Y$]{\includegraphics[scale=0.6]{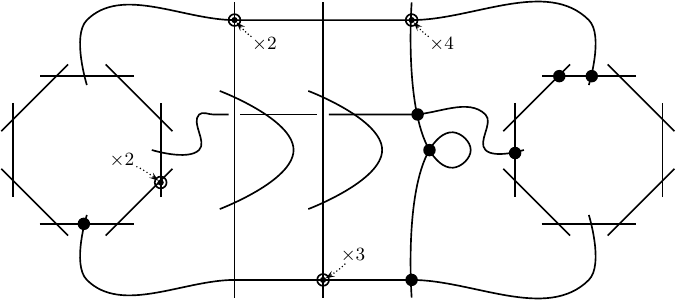}} \\
\subfloat[$Z = \blup{Y}{18}$]{\includegraphics[scale=0.663]{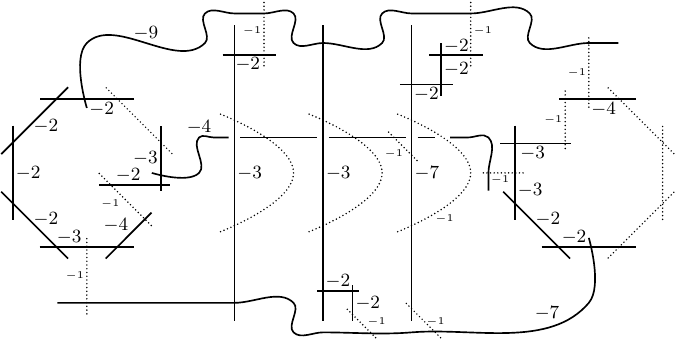}}
\caption{An example with $K^2=6$}
\label{figure:K6-sc}
\end{figure}

\section{A simply connected surface with $p_g=1$, $q=0$, and $K^2=8$}
\label{section:K^2=8}

In this section we construct a simply connected minimal complex surface of general type with $p_g=1$, $q=0$, and $K^2=8$.

According to Kondo~\cite{Kondo}, there is an Enriques surface $Y$ with an elliptic fibration over $\mathbb{CP}^1$ which has a $I_9$-singular fiber, a nodal singular fiber $F$, and two bisections $S_1$ and $S_2$; Figure~\ref{figure:Y}. Again by Kondo~\cite{Kondo}, there is an unbranched double covering $\pi: \overline{Y} \to Y$ of $Y$ where $\overline{Y}$ is an elliptic K3 surface  which has two $I_9$-singular fiber, two nodal singular fiber $\overline{F}_1$ and $\overline{F}_2$, and four sections $\overline{S}_1, \dotsc, \overline{S}_4$ such that $\pi(\overline{F}_1) = \pi(\overline{F}_2) = F$, $\pi(\overline{S}_1) = \pi(\overline{S}_3) = S_1$, and $\pi(\overline{S}_2) = \pi(\overline{S}_4) = S_2$; Figure~\ref{figure:Ybar}.

\begin{figure}[hbtb]
\centering
\includegraphics{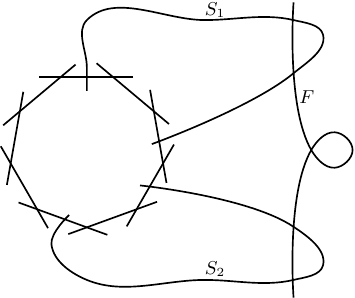}
\caption{An Enriques surface $Y$}
\label{figure:Y}
\end{figure}

\begin{figure}[hbtb]
\centering
\includegraphics{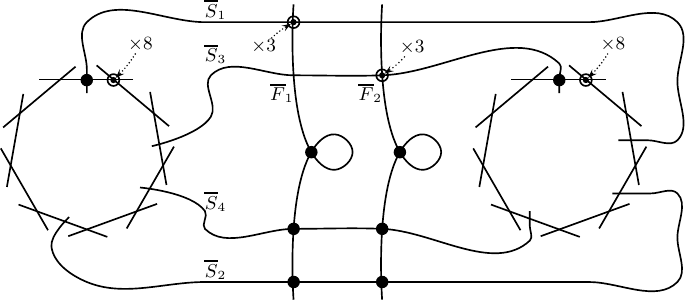}
\caption{A K3 surface $\overline{Y}$}
\label{figure:Ybar}
\end{figure}

We blow up the K3 surface $\overline{Y}$ totally 30 times at the marked points $\bullet$ and $\bigodot$. We then get a surface $\overline{Z} = \blup{\overline{Y}}{30}$; Figure~\ref{figure:Zbar}. There exist four disjoint linear chains of $\mathbb{CP}^1$'s in $\overline{Z}$:
\begin{align*}
&C_{19,6}: \uc{-2}-\uc{-2}-\uc{-9}-\uc{-2}-\uc{-2}-\uc{-2}-\uc{-2}-\uc{-4} \\
&C_{19,6}: \uc{-2}-\uc{-2}-\uc{-9}-\uc{-2}-\uc{-2}-\uc{-2}-\uc{-2}-\uc{-4} \\
&C_{73,50}: \uc{-2}-\uc{-2}-\uc{-7}-\uc{-6}-\uc{-2}-\uc{-3}-\uc{-2}-\uc{-2}-\uc{-2}-\uc{-2}-\uc{-4}\\
&C_{73,50}: \uc{-2}-\uc{-2}-\uc{-7}-\uc{-6}-\uc{-2}-\uc{-3}-\uc{-2}-\uc{-2}-\uc{-2}-\uc{-2}-\uc{-4}
\end{align*}

\begin{figure}[hbtb]
\centering
\includegraphics{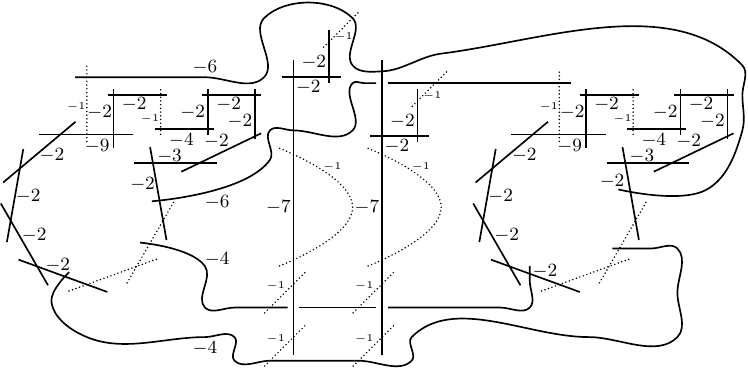}
\caption{A surface $\overline{Z} = \blup{\overline{Y}}{30}$}
\label{figure:Zbar}
\end{figure}

We contract these four chains of $\mathbb{CP}^1$'s from the surface $\overline{Z}$ so that it produces a normal projective surface $\overline{X}$ with four singular points of class $T$. It is not difficult to show that $H^2(X, \sheaf{T_X}) \neq 0$.

\begin{theorem}\label{theorem:Q-Gorenstein}
The singular surface $\overline{X}$ has a global $\mathbb{Q}$-Gorenstein smoothing. A general fiber $\overline{X}_t$ of the smoothing of $\overline{X}$ is a simply connected minimal complex surface of general type with $p_g=1$, $q=0$, and $K^2=8$.
\end{theorem}

In order to prove Theorem~\ref{theorem:Q-Gorenstein}, we apply the following proposition.

\begin{proposition}[{Y. Lee and J. Park~\cite{Lee-Park-Horikawa}}]\label{proposition:Lee-Park}
Let $V$ be a normal projective surface with singularities of class $T$. Assume that a cyclic group $G$ acts on $X$ such that
\begin{enumerate}[1.]
\item $W = V/G$ is a normal projective surface with singularities of class $T$,

\item $p_g(W)=q(W)=0$,

\item $W$ has a $\mathbb{Q}$-Gorenstein smoothing,

\item the map $\sigma : V \to W$ induced by a cyclic covering is flat, and the branch locus $D$ (resp. the ramification locus) of the map $\sigma : V \to W$ is a nonsingular curve lying outside the singular locus of $W$ (resp. of $V$), and

\item $H^1(W, \sheaf{O_W}(D))=0$.
\end{enumerate}
Then there exists a $\mathbb{Q}$-Gorenstein smoothing of $V$ that is compatible with a $\mathbb{Q}$-Gorenstein smoothing of $W$. Furthermore the cyclic covering extends to the $\mathbb{Q}$-Gorenstein smoothing.
\end{proposition}

We now construct an unramified double covering from the singular surface $\overline{X}$ to another singular surface. We begin with the Enriques surface $Y$ in Figure~\ref{figure:Y}. We blow up totally 15 times at the marked points $\bullet$ and $\bigodot$; Figure~\ref{figure:Y-dots}. We then get a surface $Z = \blup{Y}{15}$; Figure~\ref{figure:Z}. There exist two disjoint linear chains of $\mathbb{CP}^1$'s in $Z$:
\begin{align*}
&C_{19,6}: \uc{-2}-\uc{-2}-\uc{-9}-\uc{-2}-\uc{-2}-\uc{-2}-\uc{-2}-\uc{-4} \\
&C_{73,50}: \uc{-2}-\uc{-2}-\uc{-7}-\uc{-6}-\uc{-2}-\uc{-3}-\uc{-2}-\uc{-2}-\uc{-2}-\uc{-2}-\uc{-4}
\end{align*}

\begin{figure}[hbtb]
\centering
\includegraphics{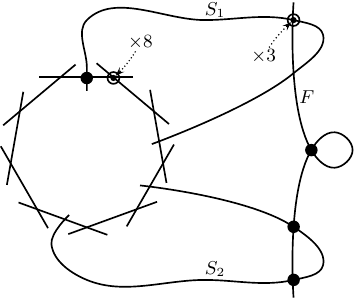}
\caption{An Enriques surface $Y$ with marked points}
\label{figure:Y-dots}
\end{figure}

\begin{figure}[hbtb]
\centering
\includegraphics{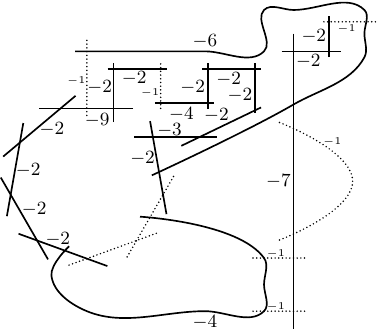}
\caption{A surface $Z = \blup{Y}{15}$}
\label{figure:Z}
\end{figure}

We contract the two chains of $\mathbb{CP}^1$'s from the surface $Z$ so that it produces a normal projective surface $X$ with two singular points class $T$. It is clear that there is an unbranched double covering $\overline{\pi} : \overline{X} \to X$. The singular surface $X$ satisfies the third condition of Proposition~\ref{proposition:Lee-Park}.

\begin{proposition}[{H. Park~\cite{Park}}]\label{proposition:Park}
The singular surface $X$ has a global $\mathbb{Q}$-Gorenstein smoothing. A general fiber $X_t$ of the smoothing of $X$ is a minimal complex surface of general type with $p_g=0$, $K^2=4$, and $\pi_1(X_t) = \mathbb{Z}/2\mathbb{Z}$.
\end{proposition}

\begin{proof}[Proof of Theorem~\ref{theorem:Q-Gorenstein}]
It is easy to show that the covering $\overline{\pi} : \overline{X} \to X$ satisfies all conditions of Proposition~\ref{proposition:Lee-Park}. Therefore the singular surface $\overline{X}$ has a global $\mathbb{Q}$-Gorenstein smoothing. Let $\overline{X}_t$ be a general fiber of the smoothing of $\overline{X}$. Since $p_g(\overline{X})=1$, $q(\overline{X})=0$, and $K_{\overline{X}}^2=8$, by applying general results of complex surface theory and $\mathbb{Q}$-Gorenstein smoothing theory, one may conclude that a general fiber $\overline{X}_t$ is a complex surface of general type with $p_g=1$, $q=0$, and $K^2=8$. Furthermore, it is not difficult to show that a general fiber $X_t$ is minimal by using a similar technique in Y. Lee and J. Park~\cite{Lee-Park-K^2=2} and the authors~\cite{PPS-K3, PPS-K4}.

\textit{Claim.} A general fiber $\overline{X}_t$ is simply connected: By Proposition~\ref{proposition:Lee-Park}, there is an induced unbranched double covering $\overline{X}_t \to X_t$; hence, a general fiber $\overline{X}_t$ is simply connected because $\pi(X_t) = \mathbb{Z}/2\mathbb{Z}$ by Proposition~\ref{proposition:Park}.
\end{proof}

\providecommand{\bysame}{\leavevmode\hbox to3em{\hrulefill}\thinspace}

\end{document}